\newcommand\floor[1]{\lfloor#1\rfloor}
\newcommand{\D }{\mathcal{D}}
\newcommand{\CalC}{\mathcal{C}}
\newcommand{\RR}{{\mathbb R}}
\newcommand{\ZZ}{{\mathbb Z}}
\newcommand{\EE }{{\mathbb E}}
\newcommand{\PP}{{\mathbb P}}
\newcommand{\dsum}{\displaystyle\sum}
\newcommand{\dlim}{\displaystyle\lim}
\newcommand{\dliminf}{\displaystyle\liminf}
\newtheorem{theorem}{Theorem}[section]
\newtheorem{proposition}[theorem]{Proposition}
\newtheorem{lemma}{Lemma}[subsection]
\newtheorem{definition}[theorem]{Definition}
\numberwithin{equation}{subsection}
\renewcommand\section{\@startsection {section}{1}{\z@}%
    {-3.5ex \@plus -1ex \@minus -.2ex}%
    {2.3ex \@plus.2ex}%
    {\normalfont\fontsize{18}{19}\bfseries}}
\renewcommand\subsection{\@startsection {subsection}{1}{\z@}%
    {-1.5ex \@plus -1ex \@minus -.2ex}%
    {1.3ex \@plus.2ex}%
    {\normalfont\fontsize{13}{14}\bfseries}}
\newcommand\xleftrightarrow[2][]{%
  \ext@arrow 9999{\longleftrightarrowfill@}{#1}{#2}}
\newcommand\longleftrightarrowfill@{%
  \arrowfill@\leftarrow\relbar\rightarrow}
\title{\bf The acceptance profile of invasion percolation at $p_c$ in two dimensions}
\author{Bounghun Bock$^*$}
\author{Michael Damron\footnote{School of Mathematics, Georgia Institute of Technology, 686 Cherry St., Atlanta, GA 30332 \\ Email: mdamron6@gatech.edu,~bbock3@gatech.edu}}
\affil{Georgia Tech}
\date{\today}
\begin{document}

\baselineskip20pt

\maketitle

\begin{abstract}
Invasion percolation is a stochastic growth model that follows a greedy algorithm. After assigning i.i.d. uniform random variables (weights) to all edges of $\mathbb{Z}^d$, the growth starts at the origin. At each step, we adjoin to the current cluster the edge of minimal weight from its boundary. In '85, Chayes-Chayes-Newman studied the ``acceptance profile'' of the invasion: for a given $p \in [0,1]$, it is the ratio of the expected number of invaded edges until time $n$ with weight in $[p,p+\text{d}p]$ to the expected number of observed edges (those in the cluster or its boundary) with weight in the same interval. They showed that in all dimensions, the acceptance profile $a_n(p)$ converges to one for $p<p_c$ and to zero for $p>p_c$. In this paper, we consider $a_n(p)$ at the critical point $p=p_c$ in two dimensions and show that it is bounded away from zero and one as $n \to \infty$.
\end{abstract}

\section{Introduction}

\subsection{The model} \label{subsection: The model}

We begin with the definition of invasion percolation. It is a stochastic growth model introduced independently by two groups (\cite{CKLW} and \cite{LB}) and is a simple example of self-organized criticality. That is, although the model itself has no parameter, its structure on large scales resembles that of another critical model: critical Bernoulli percolation.

Let $\ZZ^2$ be the two-dimensional square lattice and $\mathcal{E}^2$  be the set of nearest-neighbor edges. For a subgraph $G=(V,E)$ of $(\ZZ^2, \mathcal{E}^2)$, we define the outer (edge) boundary of $G$ as
\begin{equation*} 
\partial G := \{e=\{x,y\} \in \mathcal{E}^d :  e \notin E, \text{but}\; x \in V\; \text{or}\; y \in V\}.
\end{equation*}

Assign i.i.d uniform random $[0,1]$ variables $(\omega(e))$ to all bonds $e\in \mathcal{E}^2$. The \textit{invasion percolation cluster} (IPC) $G$ can be defined as the limit of an increasing sequence of subgraphs $(G_n)$ as follows. The graph $G_0$ has only the origin and no edges. Once $G_i=(V_i, E_i)$ is defined, we select the edge $e_{i+1}$ that minimizes $\omega(e)$ for $e \in \partial G_i$, take $E_{i+1} =E_i \cup \{e_{i+1}\}$ and let $G_{i+1}$ be the graph induced by the edge set $E_{i+1}$. The graph $G_i$ is called the invaded region at time $i$, and the graph $G = \cup_{i=0}^\infty G_i$ is called the \textit{invasion percolation cluster} (IPC). 

The first rigorous study of invasion percolation was done in '85 by Chayes-Chayes-Newman \cite{StochasticGeometry}, who took a dynamical perspective: their questions were related to the evolution of the graph $G_n$ as $n$ increases. In the '90s and '00s, results focused on a more static perspective: properties of the full invaded region. For example, the fractal dimension of $G$ was determined \cite{MR1316507} along with finer properties of $G$ like relations to other critical models \cite{MR1981994}, analysis of the pond and outlet structure \cite{MR2800910, MR2573559}, and scaling limits \cite{garban}.

In this paper, we return to the earlier dynamical perspective and study the ``acceptance profile'' of the invasion, introduced in \cite{MR725616}. Roughly speaking, the acceptance profile $a_n(p)$ at value $p$ and time $n$ is the ratio
\[
a_n(p) = \frac{\text{expected number of bonds invaded with weight in }[p,p+\text{d}p]}{\text{expected number of bonds observed with weight in }[p,p+\text{d}p]},
\]
where both the numerator and denominator are computed until time $n$, and a bond is observed by time $n$ if it is either invaded by time $n$ or is on the boundary of the invasion at time $n$. In \cite[Theorems~4.2, 4.3]{StochasticGeometry}, it is shown that for general dimensions, if $p < \pi_c$ (a certain critical threshold for independent percolation), one has $a_n(p) \to 1$ as $n \to \infty$ and if $p > \bar{p}_c$ (another threshold value with $\bar{p}_c \geq \pi_c$), one has $a_n(p) \to 0$ as $n \to \infty.$ Since publication of that paper, it has been established that $\bar{p}_c = \pi_c = p_c$, where $p_c$ is the standard critical value for independent percolation. Since $p_c=1/2$ in dimension 2, we have
\[
\lim_{n \to \infty} a_n(p) = \begin{cases}
1 & \quad\text{if } p < 1/2 \\
0 & \quad\text{if } p > 1/2.
\end{cases}
\]
This result means that when $p<p_c$, all observed edges with weight near $p$ are invaded relatively quickly, whereas for $p>p_c$, observed edges with weight near $p$ are never invaded (for $n$ large).

The case $p=p_c$ was left open in \cite{StochasticGeometry}, and it is this case we study here. It would be very interesting to establish the existence of $\lim_{n \to \infty} a_n(p_c)$, which by the following main theorem, would be a number in $(0,1)$.
\begin{theorem} \label{thm: main theorem} In two dimensions, where $p_c=1/2$,
\[
 0 < \liminf_{n \rightarrow \infty} a_n(p_c) \leq \limsup_{n \rightarrow \infty} a_{n} (p_c) < 1.
\]
\end{theorem}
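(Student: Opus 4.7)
The plan is to reduce $a_n(p_c)$ to a ratio of sums over edges and bound it via RSW-type estimates for 2D critical Bernoulli percolation. By Fubini,
\[
a_n(p_c) \;=\; \frac{\sum_{e \in \mathcal{E}^2} \PP(e \in E_n \mid \omega(e) = p_c)}{\sum_{e \in \mathcal{E}^2} \PP(e \in O_n \mid \omega(e) = p_c)},
\]
where $O_n := E_n \cup \partial G_n$ denotes the set of observed edges, the conditioning fixes $\omega(e) = p_c$ and leaves all other weights i.i.d.\ Uniform on $[0,1]$, and the identity is interpreted via the density in the $dp \to 0$ limit of the definition. A key preliminary observation is that, under this conditioning, $e$ cannot lie on the boundary at an ``outlet time'' of the invasion: at such a time the boundary minimum equals the outlet weight, which is a.s.\ strictly greater than $p_c$, so having $e$ on the boundary would force $\omega(e)$ to be at least that boundary minimum, contradicting $\omega(e) = p_c$. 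Hence $e \in \partial G_n$ only if no outlet is invaded between the first time $\tau_e$ that $e$ joins the boundary and time $n$---i.e., the invasion remains in a single pond throughout $[\tau_e, n]$.

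For the upper bound $\limsup a_n(p_c) < 1$, the plan is to construct, for each edge $e$ at an appropriately chosen scale relative to the invasion's radius at time $n$, an event of uniform positive probability on which (i) $e$ belongs to the cluster of the current pond of the invasion, and (ii) this pond's cluster has not yet been fully invaded by time $n$. On this event $e \in O_n \setminus E_n$. RSW applied to the weights inside a ball around $e$ that remain unexposed at the stopping time when the invasion first enters the ball---which are still i.i.d.\ Uniform by the Markov property of the weights---provides the uniform positive probability of the required local geometry (e.g., open circuits at parameter $p_c$ around $e$ realizing a low-weight path that the invasion prefers over $e$). Summing the per-edge inequality over $e$ yields $a_n(p_c) \leq 1 - c_2$ for some constant $c_2 > 0$.

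For the lower bound $\liminf a_n(p_c) > 0$, the symmetric argument exhibits, with uniform positive probability, the event that the invasion has reached $e$'s local near-critical cluster and invaded it completely by time $n$; since $\omega(e) = p_c$ is at most the current pond's outlet weight, $e$ lies in this cluster and so is invaded. Dual RSW (closed circuits of weight $> p_c$ around $e$, blocking alternative low-weight paths) together with near-critical invasion geometry supplies the bound. The main obstacle is making these per-edge estimates truly uniform in $e$ and in $n$: edges at different distances from the origin must be treated at different geometric scales (typically a constant fraction of the distance from $e$ to the invasion's frontier), and the interplay between local RSW events inside the chosen ball and the global pond structure demands careful stopping-time arguments exploiting the Markov property of the i.i.d.\ weights, combined with the near-critical invasion percolation technology developed in the works cited in the introduction.
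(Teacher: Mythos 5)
Your reformulation of $a_n(p_c)$ as a ratio of per-edge conditional probabilities is fine in spirit, and your general intuition (local RSW events around $e$, the pond/outlet structure) points in the right direction. But there are two genuine gaps that the sketch does not close. The first is the claim that one can produce, for each relevant edge $e$, an event of \emph{uniform positive probability} on which $e$ is observed but not invaded (resp.\ invaded), and then ``sum the per-edge inequality.'' This cannot work as stated: for $e$ at distance $m$ from the origin, the probability that $e$ is even observed by the invasion is of order $\pi(m)$ (a one-arm probability), which tends to zero. What makes the sums come out right is a quantitative time--space correspondence: the number of invaded edges in $B(m)$ has mean $\asymp m^2\pi(m)$ (J\'arai, Zhang), so time $n$ corresponds to spatial scale $m$ with $n \asymp C m^2\pi(m)$, and then $\sum_{e \subset \mathrm{Ann}(2m,4m)} \pi(m) \asymp m^2\pi(m) \asymp n \asymp \EE L_n$. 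The paper's proof is organized entirely around this correspondence, including probabilistic control of the radius $R_n$ of the invasion at time $n$ and of the stabilization radius $r_n$ (the scale inside which nothing changes after time $n$); your ``scale relative to the invasion's radius at time $n$'' is itself random and controlling it is a nontrivial lemma, not a given.

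The second gap is in the upper bound: you need the pond's cluster to be \emph{not yet fully invaded at time $n$} with probability bounded below uniformly in $n$. This is not a local RSW event around $e$; it requires showing that the $p_c$-open cluster being explored contains more than $n \asymp Cm^2\pi(m)$ edges with uniformly positive probability, which in the paper is done by a Paley--Zygmund second-moment argument on the number of vertices at a larger scale connected to the cluster, together with a bottleneck construction (an edge $f$ with $\omega(f)\in(q_m,p_c)$ pinched by a $p_c$-closed dual circuit) forcing the invasion to expose the boundary of a $q_m$-cluster of size $\asymp m^2\pi(m)$ before escaping. Relatedly, your per-edge probabilities condition on $\omega(e)=p_c$ while the events ``$e$ is observed by time $n$ but not invaded'' depend on $\omega(e)$; decoupling these requires a genuine argument (the paper introduces an auxiliary invasion that stops checking weights of $p_c$-closed edges after hitting a protective circuit), and your appeal to ``the Markov property of the weights'' does not by itself resolve it. Finally, restricting expectations to good events of high probability requires second-moment (Cauchy--Schwarz) control of the edge counts, which your outline omits.
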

This theorem roughly states that when $n$ is large, at least $c\epsilon$ fraction of invaded edges have weight in $(p_c,p_c+\epsilon]$, whereas at least $c\epsilon$ fraction of observed edges with weight in this interval are not yet invaded. To prove this result, we will need to study detailed properties of the invaded region at time $n$, which can be quite different than those of the full invaded region.

In the physics literature, the acceptance profile was considered earlier, in work of Wilkinson-Willemsen \cite{MR725616}. There, it was loosely defined as $a(r)$, the ``number of random numbers in the interval $[r,r+\text{d}r]$
which were accepted into the cluster, expressed as a fraction of the number of random numbers in that range which became available.'' It was noted in that paper that the acceptance profile appears to approach a step function with jump at $p_c$, and that for values of $p$ near $p_c$, ``there is a transition region in which some numbers are accepted and some rejected.'' (See \cite[Fig.~2]{MR725616}.) This observation, although for a different version of the acceptance profile (there is no expected value as in the acceptance profile of Chayes-Chayes-Newman that we work with), is consistent with our main theorem. The step function property of the profile has later been used to estimate numerical values of $p_c$ (see, for example, \cite{WW83b}).

In the next section, we give a rigorous definition of the acceptance profile along with the results of \cite{StochasticGeometry}. To do this, we will also introduce the standard Bernoulli percolation model.


\subsection{Acceptance Profile}

To define the acceptance profile, we use the notations of \cite{StochasticGeometry}. Let $I_n \in \mathcal{E}^2$ be the invaded bond at time $n \geq 1$ and let $x_n$ be the random weight of $I_n$ (the weight $\omega(I_n)$). For any $y \in [0,1]$, define $X_n(y)$ as the indicator that $x_n \leq y$:
 \[X_n(y) = \left\{
  \begin{array}{lr}
    1 & \text{if} \;\; x_n \leq y\\
    0 &  \text{otherwise}.
  \end{array}
\right.
\]

Let $R_n$ be the random number of new bonds which must be checked after the invasion of $I_n$ (that is, $R_0 = 4$, $R_1 = 3$, and $R_n$ is the number of boundary edges of $G_n$ that were not boundary edges of $G_{n-1}$) and define $L_n:=\dsum_{j=0}^n R_j$ to be the total number of checked bonds until the invasion of $I_n$. Clearly, $n \leq L_n \leq 4n$. Denote by $v_n$ the value of the $n^{th}$ checked bond. (Here we can enumerate the checked edges counted in $R_n$ in any deterministic fashion.) Set $V_n(y)$ to be the indicator that $v_n \leq y$:
\begin{equation*}
V_n(y) = \left\{
  \begin{array}{lr}
    1 & \text{if} \;\; v_n  \leq y\\
    0 &  \text{otherwise}.
  \end{array}
\right.
\end{equation*}
Then the acceptance profile at value $x$ by time $n$ is defined as
\begin{equation} \label{definition of a_n(x)}
a_n(x)=\dlim_{\epsilon \downarrow 0} \frac{\EE \left[ \dsum_{j=1}^n \bigg( X_j(x+\epsilon)-X_j(x) \bigg) \right]}{\EE \left[\dsum_{j=1}^{L_n} \bigg( V_j(x+\epsilon)-V_j(x) \bigg) \right] } .
\end{equation}
It is shown in \cite[Proposition~4.1]{StochasticGeometry} that $a_n(x)$ is an analytic function of $x$.

An alternative representation for the acceptance profile will be useful for us. Let $\tilde{Q}_{n}(x) = \dsum_{j=1}^n X_j(x)$ be the number of invaded edges until time $n$ with weight $\leq x$ and $\tilde{P}_{n}(x) = \dsum_{j=1}^{L_n} V_j(x)$ be the number of checked edges until time $n$ with weight $\leq x$. From \cite[Eq.~(4.3)]{StochasticGeometry}, one has 
\[
\EE[\tilde{P}_n(x)] = x \EE[L_n],
\]
and so we can rewrite \eqref{definition of a_n(x)} as
\begin{equation} \label{eq : modified definition of a_n(x)}
a_n(x) = \dlim_{\epsilon \downarrow 0} \frac{\EE [ \tilde{Q}_n(x+\epsilon) - \tilde{Q}_n(x) ] }{\epsilon \EE[L_n]}.
\end{equation}

Analysis of the IPC and the acceptance profile heavily involves tools from Bernoulli percolation, whose definition depends on a parameter $p \in [0,1]$. We will couple the percolation model to the IPC in the following standard way. For every $e \in \mathcal{E}^2$ and any $p \in [0,1]$, we say that $e$ is $p$-open if $\omega(e) \leq p$; otherwise, we say that $e$ is $p$-closed. Note that the variables $(\mathbf{1}_{\{e \text{ is }p\text{-open}\}})_{e \in \mathcal{E}^2}$ are i.i.d. Bernoulli random variables with parameter $p$. The main object of study in percolation is the connectivity properties of the graph whose edges consist of the $p$-open edges. If $p$ is large, we expect this graph to contain very large (even infinite) components and if $p$ is small we expect it to contain only small components. To formulate these ideas precisely, we say that a path (a finite or infinite sequence of edges $e_1, e_2, \dots$ such that $e_i$ and $e_{i+1}$ share at least one endpoint) is $p$-open if all its edges are $p$-open, and we write $A \xleftrightarrow{\;\;p \;\;} B$ for two sets of vertices $A$ and $B$ if there is a $p$-open path starting at a vertex in $A$ and ending at a vertex in $B$. We also write $u \xleftrightarrow{\;\;p \;\;} v$ for vertices $u,v$ when $A = \{u\}$ and $V = \{v\}$, and we use the term ``$p$-open cluster of $u$'' to refer to the set of vertices $v$ such that $u \xleftrightarrow{\;\;p \;\;} v$. Last, we write $u \xleftrightarrow{\;\;p \;\;}\infty$ to mean that the $p$-open cluster of $u$ is infinite. Given this setup, we define the critical threshold for percolation as
\[
p_c = \sup\{p \in [0,1] : \theta(p) = 0\},
\]
where
\[
\theta(p) = \mathbb{P}(0 \xleftrightarrow{\;\;p \;\;} \infty).
\]
It is known that for all dimensions $d \geq 2$, one has $p_c \in (0,1)$, and for $d=2$, $p_c=1/2$. These facts and more can be seen in the standard reference \cite{grimmett_percolation}.

In addition to $p_c$, there are other critical values that have been used in the past, and these have mostly been shown to be equal to $p_c$. The two that were used in \cite{StochasticGeometry} are
\begin{equation*}
\begin{split}
& \pi_c  = \sup \{p \in [0,1] :  \EE \#\{v : \text{$v$ is in the $p$-open cluster of 0}\} < \infty )\}, \text{ and } \\
& \bar{p}_{c} = \sup \{p \in [0,1] :  \PP (\text{$\exists$ infinite $p$-open path in a half-space)} =0 \}.
\end{split}
\end{equation*}
In this language, and for general dimensions, the theorems of Chayes-Chayes-Newman state that
\begin{equation*} \label{a_n using halfspace}
\dlim_{n \rightarrow \infty} a_n(p) = 
  \begin{cases}
    1 & \text{if} \;\; p < \pi_c\\
    0 &  \text{if} \;\; p > \bar{p_c}.
  \end{cases}
\end{equation*}
Because $\pi_c$ and $\bar{p_c}$ are both known to be equal to $p_c$ (see \cite{MR1068308,MR0115221,MR914958}), this result specifies the limiting behavior of the acceptance profile at all values of $p \neq p_c$. Our main result, Theorem~\ref{thm: main theorem}, shows that in two dimensions, the limiting behavior of $a_n(p_c)$ is different than that of $a_n(p)$ for any other value of $p$: it remains bounded away from zero and one.

\subsection{Notation and outline of the paper}\label{sec: notations}
First we gather some notation used in the paper. For $n \geq 1$ let $B(n) = [-n,n]^2$ be the box of sidelength $2n$, and for $m<n$, let $\text{Ann}(m,n)$ be the annulus $B(n) \setminus B(m)$. We will be interested in connection probabilities from points to boundaries of boxes, so we set
\[
\pi(p,n) = \mathbb{P}(0 \xleftrightarrow{\;\;p \;\;} \partial B(n))\text{ and } \pi(n) = \pi(p_c,n).
\]
Many connection probabilities (or their complements) can be expressed in terms of connections on the dual graph $(\mathbb{Z}^2)^*$. To define it, let $(\mathbb{Z}^2)^* = \left( \frac{1}{2}, \frac{1}{2} \right) + \mathbb{Z}^2$ be the set of dual vertices and let $(\mathcal{E}^2)^*$ be the edges between nearest-neighbor dual vertices. For $x \in \mathbb{Z}^2$ we write $x^* = x + \left( \frac{1}{2}, \frac{1}{2}\right)$ for its dual vertex. For an edge $e \in \mathcal{E}^2$, we denote its endpoints (left, respectively right or bottom, respectively top) by $e_x,e_y \in \mathbb{Z}^2$. The edge $e^* = \left\{ e_x + \left( \frac{1}{2}, \frac{1}{2} \right), e_y - \left( \frac{1}{2}, \frac{1}{2} \right)\right\}$ is called the edge dual to $e$. (It is the unique dual edge that bisects $e$.) A dual edge $e^*$ is called $p$-open if $e$ is $p$-open, and is $p$-closed otherwise. A dual path is a finite or infinite sequence of dual edges such that consecutive edges share at least one endpoint. A circuit (or dual circuit) is a finite path (or dual path) which has the same initial and final vertices. 

For two functions $f(x)$ and $g(x)$ from a set $\mathcal{X}$ to $\RR$, the notation $f(x) \asymp g(x) $ means $\frac{f(x)}{g(x)}$ is bounded away from $0$ and $\infty$, uniformly in $x \in \mathcal{X}$.

In the next section, we give the proof of Theorem~\ref{thm: main theorem}. It is split into three subsections. In Section~\ref{correlation length}, we introduce correlation length and results which are frequently used in two-dimensional percolation. In Section~\ref{sec: lower bound}, we prove the lower bound of Theorem~\ref{thm: main theorem} and in Section~\ref{sec: upper bound}, we prove the upper bound of Theorem~\ref{thm: main theorem}.

\section{Proof of Theorem~\ref{thm: main theorem}}\label{sec: main theorem}


\subsection{Preliminaries} \label{correlation length}

We first introduce the finite-size scaling correlation length (see a more detailed survey in \cite{MR2438816}).  Let
\begin{equation*}
\sigma(n,m,p) = \PP(\text{$\exists$ a $p$-open horizontal crossing of $[0,n] \times [0,m]$}).
\end{equation*}
Here, a horizontal crossing is a path which remains in $[0,n]\times [0,m]$, with initial vertex in $\{0\} \times [0,m]$ and final vertex in $\{n\}\times \{0,m\}$. For any $\epsilon >0$, we set
\begin{equation*} 
L(p, \epsilon) :=  \left\{ 
  \begin{array}{lr}
   \min \{n : \sigma(n, n,p) \leq \epsilon \}  & \text{if} \;\; p < p_c \\
   \min \{n : \sigma(n, n,p)   \geq 1-\epsilon \}  &  \text{if} \;\; p > p_c \\
  \end{array}
\right.
\end{equation*}

$L(p, \epsilon)$ is called the finite-size scaling correlation length and its scaling as $p \to p_c$ does not depend on $\epsilon$, so long as $\epsilon$ is small enough. That is, there exists an $\epsilon_0 > 0$ such that for $\epsilon_1, \epsilon_2 \in (0,\epsilon_0]$, $L(p, \epsilon_1) \asymp L(p, \epsilon_2)$ as $p \to p_c$ \cite[Eq.~(1.24)]{MR879034}. For this reason, we set
\[
L(p) = L(p,\epsilon_0).
\]
Because $L(p) \to \infty$ as $p \to p_c$ \cite[Prop.~4]{MR2438816} and $L(p) \to 0$ as $p \to 0$ or $p\to 1$, the approximate inverses
\begin{equation*}
\begin{split}
& p_n = \sup \{p  > p_c : L(p) > n\}\\
& q_n = \inf   \{q  < p_c : L(q) > n\}\\
\end{split}
\end{equation*}
are well-defined.

Next we list relevant and now standard properties of the correlation length with references to their proofs.

\begin{enumerate}[1.]
\item {} \cite[Thm.~1]{MR879034} For $n \leq L(p) $ and $p \neq p_c$,  \\
\begin{equation}
 \pi(p,n) \asymp \pi(n).    \label{compare for different p}
\end{equation}

 \item {} \cite[Thm.~2]{MR879034} There are positive constants $C_1$ and $C_2$ such that for all $p >p_c$
\begin{equation}
\pi(L(p)) \leq \pi(p, L(p)) \leq C_1 \theta(p) \leq C_1 \pi(p,L(p)) \leq C_2 \pi(L(p)). \label{bound pi for bigger than critical}
\end{equation}


\item{} \cite[Eq.~(2.8)]{MR1981994}   There are positive constants $C_3, C_4$ such that
\begin{equation}
\sigma(2mL(p), mL(p),p)  \geq 1- C_3 \exp(-C_4 m), \;\; \text{for} \;\;m > 1.  \label{2.8 of Jarai}
\end{equation}

\item {} \cite[Eq.~(2.10)]{MR1981994}     There is a constant $D$ such that
\begin{equation}
\lim_{\delta \downarrow 0} \frac{L(p-\delta)}{L(p)} \leq D \;\; \text{for} \;\; p > p_c.  \label{2.10 of Jarai} 
\end{equation}

\item{} \cite[Cor.~3.15]{MR799280} There exists a constant $D_1>0$ such that
\begin{equation}
\frac{\pi(m)}{\pi(n)} \geq D_1 \sqrt{\frac{n}{m}} \;\; \text{for }  m \geq n \geq 1.  \label{bounded D_1}
\end{equation}

\item{} \cite[Prop.~34]{MR2438816} (Arm events). Fix $e = \{ e_x, e_y\}$ and let $A_n^{2,2}$ be the event that $e_x$ and $e_y$ are connected to $\partial B(n)$ by $p_c$-open paths not containing $e$, and ${e_x}^*$ and ${e_y}^*$ are connected to $\partial B(n)^*$ by $p_c$-closed dual paths not containing $e^*$. Note that these four paths are disjoint and alternate. For $n \geq 1$,
\begin{equation}\label{definition of $A_n^{2,2}$}
\begin{split}
& (p_n - p_c) n^2 \PP(A_n^{2,2})  \asymp 1\\
& (p_c-q_n ) n^2 \PP(A_n^{2,2}) \asymp 1.\\
\end{split} 
\end{equation}

\item{} \cite[Sec.~3.2]{MR2438816} (Russo-Seymour-Welsh: RSW)\label{RSW} For every $k, l\geq 1$, there exists $\delta_{k,l} >0$ such that for all $p \in [p_c,p_n]$ \text{(respectively $q \in [q_n, p_c]$)},
\begin{equation*}
\begin{split}
& \PP(\text{$\exists$ a $p$-open (respectively $q$-open) horizontal crossing of $[0,kn] \times [0,ln]) > \delta_{k,l}$}\\
& \PP(\text{$\exists$ a $p$-closed (respectively $q$-closed) horizontal dual crossing of $([0,kn] \times [0,ln])^* > \delta_{k,l}$}.\\
\end{split}
\end{equation*}
In addition, applying the FKG inequality \cite[Ch.~2]{grimmett_percolation}, for all $p \in [p_c,p_n]$ \text{(resp. $q \in [q_n, p_c]$)},
\begin{equation*}
\begin{split}
& \PP \big(\text{Ann($n, kn$) contains a $p$-open (resp. $q$-open) circuit around the origin} \big) > (\delta_{k,k-2})^4\\
& \PP \big(\text{Ann($n, kn)^*$ contains a $p$-closed (resp. $q$-closed) dual circuit around the origin} \big)> (\delta_{k,k-2})^4.\\
\end{split}
\end{equation*}

\item {} \cite{MR1981994,MR1316507}   Let $|\mathcal{S}_n|$ be the number of invaded edges (edges in $G$) inside $B(n)$. Then,
\begin{equation}
\EE|\mathcal{S}_n| \asymp n^2 \pi(n).  \label{Jarai and Zhaing expectation}
\end{equation}
\end{enumerate}

Last, we prove some lemmas that will be helpful in the proof of the main theorem. These lemmas will bound the random variables
\begin{equation*}
\begin{split}
R_n & :=\min\{ k : I_i \subset B(k)\; \text{for}\;  i=1,2,\cdots, n\}  \\
r_n & :=\max \{k : I_i \subset  B(k)^c \;\; \text{for all} \;\; i > n\}.
\end{split}
\end{equation*}
$R_n$ is a radius of the invaded region at time $n$, and $r_n$ is the largest size of box such that the invasion does not change in this box after time $n$.
%

\begin{lemma} \label{lemma: upperbound of R}  There exists a constant $\mathcal{C}_1>0$ such that for all $n \geq 1$ and $C>0$,
\begin{equation*}
\PP(R_{\lfloor C n^2 \pi(n)\rfloor } <n ) \leq \frac{\mathcal{C}_1}{C}. \label{eq: upperbound of R} 
\end{equation*}
\end{lemma}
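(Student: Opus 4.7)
The plan is to reduce the event $\{R_{\lfloor Cn^2\pi(n)\rfloor}<n\}$ to a statement about the number of invaded edges inside $B(n)$ and then apply Markov's inequality together with property (\ref{Jarai and Zhaing expectation}) of the preliminaries.

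First, I would unpack the definition of $R_N$ with $N=\lfloor Cn^2\pi(n)\rfloor$. By definition, $R_N<n$ means that every edge $I_1,\dots,I_N$ invaded by time $N$ lies strictly inside $B(n)$. In particular, at least $N$ of the edges that ever belong to the invasion cluster $G$ are contained in $B(n)$. If $|\mathcal{S}_n|$ denotes the total number of invaded edges inside $B(n)$ (as in property (\ref{Jarai and Zhaing expectation})), this gives the deterministic inclusion
\[
\{R_N<n\}\;\subseteq\;\{|\mathcal{S}_n|\geq N\}.
\]

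Second, I would apply Markov's inequality and the bound $\EE|\mathcal{S}_n|\asymp n^2\pi(n)$ from (\ref{Jarai and Zhaing expectation}). Let $K>0$ be a constant for which $\EE|\mathcal{S}_n|\leq K\, n^2\pi(n)$ for all $n\geq 1$. Then
\[
\PP(R_N<n)\;\leq\;\PP(|\mathcal{S}_n|\geq N)\;\leq\;\frac{\EE|\mathcal{S}_n|}{N}\;\leq\;\frac{K\,n^2\pi(n)}{\lfloor C n^2\pi(n)\rfloor}.
\]
For $C$ large enough that $Cn^2\pi(n)\geq 2$, the denominator is at least $(C/2)n^2\pi(n)$, yielding $\PP(R_N<n)\leq 2K/C$; for smaller $C$ (or small $n$ for which $Cn^2\pi(n)<2$), the asserted bound $\mathcal{C}_1/C$ is automatic by choosing $\mathcal{C}_1$ sufficiently large, since the probability is at most $1$. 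Setting $\mathcal{C}_1:=\max(2K,2)$ gives the desired inequality uniformly in $n$ and $C$.

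The argument is essentially a one-line Markov bound, so there is no real obstacle: the only subtlety is the containment $\{R_N<n\}\subseteq\{|\mathcal{S}_n|\geq N\}$, which uses the basic fact that the edges invaded up to time $N$ are a subset of the full invasion cluster $G$ intersected with $B(n)$. Everything else follows from the already-cited estimate $\EE|\mathcal{S}_n|\asymp n^2\pi(n)$ of Kesten-Zhang and J\'arai.
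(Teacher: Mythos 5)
Your proposal is correct and follows exactly the paper's argument: the inclusion $\{R_{\lfloor Cn^2\pi(n)\rfloor}<n\}\subseteq\{|\mathcal{S}_n|\geq \lfloor Cn^2\pi(n)\rfloor\}$ followed by Markov's inequality and the bound $\EE|\mathcal{S}_n|\asymp n^2\pi(n)$. Your extra care with the floor function and small values of $Cn^2\pi(n)$ is a harmless refinement of the same proof.
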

\begin{proof} The event $\{R_{\lfloor C n^2 \pi(n)\rfloor } < n\}$ implies that $|\mathcal{S}_n| \geq \lfloor C n^2\pi(n)\rfloor$. By Markov's inequality and \eqref{Jarai and Zhaing expectation},
\[
\mathbb{P}(R_{\lfloor Cn^2\pi(n)\rfloor } < n) \leq \mathbb{P}(|\mathcal{S}_n| \geq \lfloor C n^2\pi(n)\rfloor ) \leq \frac{\mathbb{E}|\mathcal{S}_n|}{\lfloor Cn^2\pi(n)\rfloor} \leq \frac{\mathcal{C}_1}{C}.
\]
\end{proof}

\begin{lemma} \label{lemma: upperbound of small r}  For any $\eta_0 >0$, there exists $\mathcal{C}_{2}>0$ such that for any $C \geq \mathcal{C}_{2}$ and $n \geq 1$,
\begin{equation*}
\PP(r_{\lfloor C  n^2 \pi(n)\rfloor} <  2n )  \leq \eta_0 \label{eq: upper bound of small r}
\end{equation*}
\end{lemma}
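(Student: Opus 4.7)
My plan is to adapt the proof of Lemma~\ref{lemma: upperbound of R} (Markov's inequality on $|\mathcal{S}_{\cdot}|$), augmented by a geometric ``confinement'' argument because $r_n$ governs the invasion at all times later than $n$. Observe that $\{r_{\lfloor Cn^2\pi(n)\rfloor} < 2n\}$ is precisely the event that $I_i$ touches $B(2n)$ for some $i > \lfloor Cn^2\pi(n)\rfloor$; write $T := \max\{i : I_i \not\subset B(2n)^c\}$, so the lemma is equivalent to $\PP(T > \lfloor Cn^2\pi(n)\rfloor) \leq \eta_0$. The difficulty compared with Lemma~\ref{lemma: upperbound of R} is that in principle the invasion could keep returning to $B(2n)$ at arbitrarily late times, so a Markov bound on $|\mathcal{S}_{2n}|$ alone cannot control $T$.

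Fix a large $K = K(\eta_0)$ and set $p := p_{Kn}$, so $L(p)\asymp Kn$ and the RSW estimates of item~7 of the Preliminaries apply at scales $\lesssim Kn$. I would introduce three good events, each of probability at least $1-\eta_0/3$:
\begin{itemize}
\item $E_1$: there exists a $p$-closed dual circuit $\gamma$ around $B(2n)$ inside $B(Kn/2)$. Obtained by partitioning $B(Kn/2)\setminus B(2n)$ into $\Theta(\log K)$ disjoint dyadic annuli, noting that by RSW and FKG each contains such a circuit with probability $\geq (\delta_{2,0})^4$, and boosting via independence across disjoint annuli for $K$ large.
\item $E_2 := \{R_{\lfloor Cn^2\pi(n)\rfloor} \geq Kn\}$. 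Apply Lemma~\ref{lemma: upperbound of R} at scale $Kn$, using the crude bound $(Kn)^2\pi(Kn) \leq K^2 n^2\pi(n)$; this gives $\PP(E_2^c) \leq \eta_0/3$ provided $C \geq 3\mathcal{C}_1 K^2/\eta_0$.
\item $E_3 := \{|\mathcal{S}_{Kn}| \leq C_0\, n^2\pi(n)\}$ for some $C_0=C_0(K,\eta_0)$, via Markov's inequality applied to \eqref{Jarai and Zhaing expectation}.
\end{itemize}

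The geometric heart of the proof would be to show that on $E_1 \cap E_2$, $T \leq |\mathcal{S}_{Kn}|$; combined with $E_3$ this gives $T \leq C_0\, n^2\pi(n)$, and the union bound completes the proof on taking $\mathcal{C}_2 := \max\{3\mathcal{C}_1 K^2/\eta_0, C_0\}$. Heuristically: by time $\lfloor Cn^2\pi(n)\rfloor$ the invasion has crossed $\gamma$ and reached beyond $\partial B(Kn)$, and by the pond/outlet structure (for any $q > p_c$, only finitely many invaded edges exceed weight $q$), all sufficiently late invasions have weight $\leq p$, hence none can cross $\gamma$ back into $B(2n)$; consequently, once the invasion first escapes $\gamma$ it never returns, so $T$ is bounded by the time of first escape, which in turn is $\leq |G \cap B(Kn)| = |\mathcal{S}_{Kn}|$. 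The main obstacle I expect is in making this confinement claim rigorous: the pond/outlet argument only forbids \emph{re-crossing} $\gamma$, it does \emph{not} automatically prevent the invasion from invading $p$-open edges in finite $p$-open clusters trapped inside $\gamma$, some of which may intersect $B(2n)$ and contribute late invasions. Closing this gap will likely require either a second nested $p$-open circuit inside $\gamma$ (ensuring such finite clusters are absorbed efficiently), or a quasi-multiplicativity estimate on the number of finite $p$-clusters the invasion leaks into inside $\gamma$, in combination with the item~8 bound $|G \cap B(2n)| = O(n^2\pi(n))$ in expectation.
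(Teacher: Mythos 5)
Your overall strategy (trap the invasion with a $p$-closed dual circuit for a supercritical $p$, force it far out via Lemma~\ref{lemma: upperbound of R}, and bound the escape time by a volume count) is the same as the paper's, and your $E_2$ computation matches the paper's almost verbatim. But the confinement step you flag as "the main obstacle" is in fact the entire content of the lemma, and neither of your proposed fixes closes it. The failure mode is not only the one you name (trapped $p$-open clusters inside $\gamma$): your intermediate claim that $T$ is bounded by the \emph{first} escape time of $\gamma$ is likely false, since between the first crossing of $\gamma$ and much later times the invasion may repeatedly take edges of weight $>p$ emanating from the already-invaded region inside $\gamma$, each of which can open access to a trapped cluster meeting $B(2n)$. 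A nested $p_c$-open circuit $\mathfrak{C}_1$ around $B(2n)$ (your fix (a), and condition (i) of the paper's event $D_{k,m}$) blocks such late incursions only once one knows that every edge invaded after some controlled time connects to a single "outlet" edge lying \emph{outside} $\mathfrak{C}_1$ by an invaded path of strictly smaller weights; a path of this kind cannot touch $\mathfrak{C}_1$ (its edges would have been invaded earlier) and hence cannot reach $B(2n)$.

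What is missing from your event list is the ingredient that produces this outlet at a controlled time and location: the paper additionally requires a $p_c$-open circuit in a far annulus that is connected to infinity by a $p$-open path (conditions (iii)--(iv) of $D_{k,m}$). Reaching that circuit gives the invasion an unending supply of weights $<p$, so it never again invades an edge of weight $>p$; combined with the fact that crossing the dual circuit forces at least one invaded weight $>p$, this yields an outlet $\hat e$ with $\omega(\hat e)>p$ invaded \emph{before} the invasion reaches radius $2^{k+1+m}$, hence before time $\lfloor Cn^2\pi(n)\rfloor$ on the event $E_2$. Without this, the last edge of weight $>p$ that the invasion takes could occur arbitrarily late and could sit adjacent to the invaded set inside $\gamma$, re-opening $B(2n)$ after time $\lfloor Cn^2\pi(n)\rfloor$; your events $E_1,E_2,E_3$ assign no small probability to this scenario. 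Note also that once the outlet argument is in place, the paper concludes $r_j\geq 2n$ directly from $\{R_j\geq 2^{k+1+m}\}\cap D_{k,m}$, so your event $E_3$ and the detour through $T\leq|\mathcal{S}_{Kn}|$ are unnecessary (and, as noted, $T\leq|\mathcal{S}_{Kn}|$ does not follow from the weaker, correct form of confinement, which only controls times after the outlet rather than after the first escape).
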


\begin{proof} For $k,m \geq 1$, we consider the event $D_{k,m}$ defined by the following conditions:
\begin{enumerate}[(i)]
\item There is a $p_c$-open circuit around the origin in Ann$(2^{k+1}, 2^{k+1+\frac{m}{8}})$.
\item There is a $p_{2^{k+1+\frac{m}{4}}}$-closed dual circuit around the origin in Ann$(2^{k+1+\frac{m}{8}}, 2^{k+1+\frac{m}{4}})^*$.
\item There is a $p_c$-open circuit around the origin in Ann$(2^{k+1+\frac{m}{2}}, 2^{k+1+m})$.
\item The circuit from (iii) is connected to infinity by a $p_{2^{k+1+\frac{m}{4}}}$-open path.
\end{enumerate}
(See Figure~\ref{fig: D_{k,m}} for an illustration of $D_{k,m}$.)

\begin{figure} [h] 
\centering
\includegraphics[width=45mm]{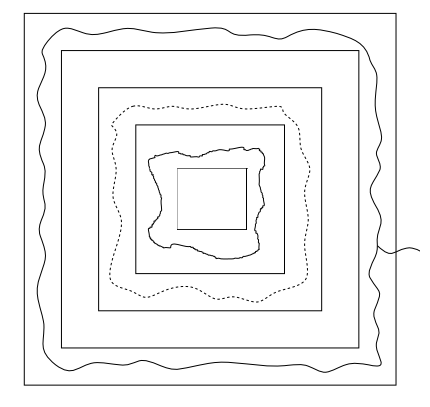}
\caption{ Illustration of the event $D_{k,m}$. The boxes, in order from smallest to largest, are $B(2^{k+1})$, $B(2^{k+1+\frac{m}{8}})$, $B(2^{k+1+\frac{m}{4}})$, $B(2^{k+1+\frac{m}{2}})$ and $B(2^{k+1+m})$. The solid circuit is $p_c$-open, the path to infinity is $p_{2^{k+1+\frac{m}{4}}}$-open, and the dotted path is $p_{2^{k+1+\frac{m}{4}}}$-closed.}
\label{fig: D_{k,m}}
\end{figure}

For $j,k,m \geq 1$, we claim that
\begin{equation}\label{eq: static_implication}
\left( \{R_j \geq 2^{k+1+m} \} \cap D_{k,m}\right) \subset \{r_j \geq 2^{k+1}\}.
\end{equation}
To see why, suppose the left side occurs, and choose $\mathfrak{C}_1$ as a circuit from (i) in the definition of $D_{k,m}$, $\mathfrak{C}_2$ as a circuit from (ii), and $\mathfrak{C}_3$ as a circuit from (iii). Let $n_1$ be the time at which the invasion invades all of $\mathfrak{C}_1$ and for $i=2,3$, let $n_i$ be the first time that the invasion invades an edge from $\mathfrak{C}_i$. Note that $n_1 \leq n_2 \leq n_3 \leq j$. (The last inequality holds because $R_j \geq 2^{k+1+m}$.)

After time $n_3$, the invasion has an unending supply of edges with weight $<p_{2^{k+1+\frac{m}{4}}}$ to invade, so it will never again take an edge with weight larger than that. Furthermore, at time $n_2$, the invasion must take an edge with weight larger than $p_{2^{k+1+\frac{m}{4}}}$. This implies that at some time $n_4 \in [n_2,n_3)$, the invasion invades an outlet: an edge $\hat e$ such that all edges invaded after time $n_4$ have weight $<\omega(\hat e)$. Furthermore, this outlet can be chosen to have weight $\omega(\hat e)>p_{2^{k+1+\frac{m}{4}}} > p_c$.

Directly before time $n_4$, the entire boundary of the invasion (excluding $\hat e$ itself) consists of edges with weight $>\omega(\hat e)$. Since invaded weights beyond time $n_4$ are $<\omega(\hat e)$, none of these boundary edges will ever be invaded. Therefore all invaded edges after time $n_4$ are invaded through $\hat e$. In other words, if $e$ is any edge invaded after time $n_4$, there is a path $P(e)$ connecting $\hat e$ to $e$ consisting of edges with weight $<\omega(\hat e)$ and which are invaded after time $n_4$. It is important to note that $P(e)$ cannot touch $\mathfrak{C}_1$. Indeed, if were to contain an edge $f$ which shared an endpoint with an edge on $\mathfrak{C}_1$ (including the possibility that $f \in \mathfrak{C}_1$), then $f$ would be accessible to the invasion at time $n_1$, and so $f$ would be invaded before time $n_4$, a contradiction.

Finally, to prove \eqref{eq: static_implication}, assume that $r_j < 2^{k+1}$. Then there is some time $j' > j$ at which the invasion invades an edge $e$ in $B(2^{k+1})$. Since $j'>n_4$, there is a path $P(e)$ from $\hat e$ to $e$ as in the preceding paragraph which cannot touch $\mathfrak{C}_1$. This means $\hat e$ is in the interior of $\mathfrak{C}_1$. On the other hand, if $f$ is any edge of $\mathfrak{C}_3$ (necessarily invaded after time $n_4$), the path $P(f)$ connecting $\hat e$ to $f$ would then toucn $\mathfrak{C}_1$, a contradiction. This shows \eqref{eq: static_implication}.


Applying \eqref{eq: static_implication} for $C>0$ and $k,m \geq 1$, we obtain
\begin{equation} 
\PP(r_{\lfloor C  2^{2k} \pi(2^k)\rfloor } <  2^{k+1} ) \leq 2 \max\{\PP(R_{\lfloor C 2^{2k} \pi(2^k)\rfloor} < 2^{k+1+m}), \PP(D_{k,m}^c).  \}\label{eq: upper bound r in lemma}
\end{equation}
As in \cite[proof of Thm.~5]{MR2962082}, the RSW theorem implies that $\mathbb{P}(D_{k,m}^c) \leq e^{-\delta m}$ for some $\delta>0$ uniformly in $k$, so we can fix $m$ so that
\begin{equation} \label{eq: D_m in lemma}
 \PP(D_{k,m}^c)   \leq \frac{\eta_0}{2}  \text{ for all } k \geq 1.
\end{equation}
From Lemma~\ref{lemma: upperbound of R} and the fact that $\pi(n)$ is decreasing in $n$, for any $C \geq (2 \mathcal{C}_1 2^{2+2m})/\eta_0 =: \mathcal{C}_2$, we get
\begin{align*}
\PP \big( R_{\lfloor C 2^{2k} \pi(2^k)\rfloor} < 2^{k+1+m} \big) &\leq \PP \big(R_{ \lfloor (2\mathcal{C}_1/ \eta_0)   2^{2(k+1+m)} \pi(2^{k+1+m})\rfloor} < 2^{k+1+m} ) \leq \frac{\eta_0}{2}.
\end{align*} 
Combining this with \eqref{eq: upper bound r in lemma} and \eqref{eq: D_m in lemma}, we find that for $C \geq \mathcal{C}_2 $,
 \begin{align}
\PP \big( r_{\lfloor C 2^{2k} \pi(2^k)\rfloor} < 2^{k+1} \big)  \leq \eta_0,  \nonumber
\end{align}
and this completes the proof for $n$ of the form $2^k$.

For general $n$, we let $k=k(n) :=\floor{\log_2 n}$, so that
for any $C \geq 4\mathcal{C}_2$,
 \begin{align}
\PP(r_{\lfloor C n^2 \pi(n)\rfloor} <  2n ) 
\leq \PP(r_{ \lfloor \mathcal{C}_2 2^{2(k+1)} \pi(2^{k+1})\rfloor} <  2^{k+2}) \leq \eta_0. \nonumber
\end{align}
\end{proof}

\subsection{Lower bound} \label{sec: lower bound}
In this section, we show that
\begin{equation}\label{eq: to_show_lower_bound}
\liminf_{n \to \infty} a_n(p_c) > 0.
\end{equation}
The first step is to show that it suffices to prove this result for only a certain subsequence of values of $n$. Namely, we first prove that if there exists $\mathcal{C}_3>0$ such that 
\begin{equation}\label{eq: lower_bound_subsequence_reduction}
\dliminf_{n \rightarrow \infty} a_{ \floor{ \mathcal{C}_3 n^2 \pi(n) } } (p_c) >0,
\end{equation}
then \eqref{eq: to_show_lower_bound} follows. 

So assume that \eqref{eq: lower_bound_subsequence_reduction} holds, and let 
\[
k=k(n) := \max \{\ell : \mathcal{C}_3 \ell^2\pi(\ell) \leq n \}.
\] 
(Note that this $k$ actually exists for large $n$ since $\pi(\ell) \geq D_1/\sqrt{\ell}$ by \eqref{bounded D_1}.) Since $\tilde{Q}_n(p_c+\epsilon) - \tilde{Q}_n(p_c)$ is increasing in $n$, 
\[
\tilde{Q}_n(p_c+\epsilon) - \tilde{Q}_n(p_c) \geq \tilde{Q}_{ \floor{ \mathcal{C}_3 k^2 \pi(k) } } (p_c+\epsilon) - \tilde{Q}_{ \floor{ \mathcal{C}_3 k^2 \pi(k) } }(p_c).
\]
So using $n \leq L_n \leq  4n$, we obtain 
\begin{equation*} \label{eq:  first eq in proof corollary}
\begin{split}
a_{n}(p_c) = \lim_{\epsilon \downarrow 0} \frac{\EE[\tilde{Q}_n(p_c+\epsilon) - \tilde{Q}_n(p_c)]}{ \epsilon \EE[L_n]}  &\geq \dlim_{\epsilon \downarrow 0} \frac{\EE[\tilde{Q}_{ \floor{ \mathcal{C}_3 k^2 \pi(k) } } (p_c+\epsilon) - \tilde{Q}_{ \floor{ \mathcal{C}_3 k^2 \pi(k) } }(p_c)]}{\epsilon \EE[L_{\floor{ \mathcal{C}_3  k^2 \pi(k)} } ]} \frac{\EE[L_{\floor{ \mathcal{C}_3  k^2 \pi(k)} } ]}{\EE[L_n]}\\
& \geq a_{\floor{ \mathcal{C}_3 k^2 \pi(k) } }(p_c)   \frac{\mathcal{C}_3  k^2 \pi(k)}{8n}\\
\end{split}
\end{equation*}
Thus to conclude \eqref{eq: to_show_lower_bound} from \eqref{eq: lower_bound_subsequence_reduction}, it suffices to show that $\dliminf_{n \rightarrow \infty} k^2 \pi(k)/n$ is positive. For large $n$, $k(n)$ is greater than 1; therefore,
\[ 
 \frac{k^2 \pi(k)}{n} \geq\frac{k^2 \pi(k)}{\mathcal{C}_3 (k+1)^2 \pi(k+1)} \geq \mathcal{C}_3^{-1}\bigg( \frac{ k }{k+1} \bigg)^2 \geq \frac{1}{4\mathcal{C}_3} > 0.
\] 

To prove \eqref{eq: lower_bound_subsequence_reduction}, we use the following lemma, which bounds the $k^{th}$ moment of the number of edges of the IPC with $(p_c, p_c+\epsilon]$ in $B(n)$.


\begin{lemma} \label{E(Y(B_n))} Let $\mathcal{Y}_{n}(\epsilon)$ be the number of invaded edges in $B(n)$ with $(p_c, p_c+\epsilon]$ for $\epsilon >0$. There exist positive constants $\mathcal{C}_4$ and $\mathcal{C}_5= \mathcal{C}_5(t)$ such that for all $n\geq 1$, 
\[
\liminf_{\epsilon \downarrow 0} \frac{\EE |\mathcal{Y}_n(\epsilon)|}{\epsilon} \geq \mathcal{C}_4 n^2 \pi(n)
\]
and
\[
\EE|\mathcal{Y}_n(\epsilon)|^t \leq \mathcal{C}_5 \left( \epsilon n^2 \pi(n) \right)^t \text{ for all } t \geq 1\text{ and } \epsilon>0.
\]
\end{lemma}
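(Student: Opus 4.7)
The plan is to reduce each event $\{e \in G,\,\omega(e)\in(p_c,p_c+\epsilon]\}$ to arm-event probabilities in near-critical Bernoulli percolation and then sum over $e \in B(n)$ using the dyadic identity $\sum_{e \in B(n)} \pi(|e|) \asymp n^2 \pi(n)$. The identity itself follows from the monotonicity of $\pi$ (which gives the lower bound by restricting the sum to $k\in[n/2,n]$) and from \eqref{bounded D_1} (which gives $\pi(k) \leq (\pi(n)/D_1)\sqrt{n/k}$, yielding the upper bound). The main tools are \eqref{compare for different p}, \eqref{bound pi for bigger than critical}, the FKG inequality, and the uniform RSW estimates from Section~\ref{correlation length}.

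For the lower bound, I will show that for each edge $e$ at distance $k=|e|$ from the origin with $k\lesssim n$,
\begin{equation*}
\liminf_{\epsilon \downarrow 0}\;\frac{\PP\big(e \in G,\,\omega(e)\in(p_c,p_c+\epsilon]\big)}{\epsilon}\;\geq\; c\,\pi(k).
\end{equation*}
To this end I construct a sub-event involving four factors that will combine on nearly disjoint edge sets: (a) $\omega(e)\in(p_c,p_c+\epsilon]$ (probability $\epsilon$); (b) one endpoint of $e$ is joined to the origin by a $p_c$-open path inside $B(2k)\setminus B(e,1)$ (probability $\asymp \pi(k)$ by RSW and quasi-multiplicativity); (c) a local four-arm alternation at $e$ at scale $1$ together with the requirement that every boundary edge of the local $p_c$-open piece attached to $e$ has weight larger than $p_c+\epsilon$ except possibly $e$ itself (probability bounded below uniformly in $\epsilon$ at the cost of a constant independent of $n,k$); and (d) a surrounding $p_c$-open circuit in Ann$(2n,3n)$ joined to infinity by a $(p_c+\epsilon)$-open arm, which by \eqref{bound pi for bigger than critical} and RSW has probability bounded uniformly away from zero. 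On this sub-event, the invasion first fills the $p_c$-open cluster of the origin up to $e$, finds the local boundary dominated by weights $>p_c+\epsilon\geq\omega(e)$, and is forced to select $e$ as the boundary minimum. Since the four factors are (essentially) on disjoint edge sets, their probabilities multiply, yielding $\gtrsim \epsilon \pi(k)$; summing over $e\in B(n/2)$ then produces the first bound of the lemma.

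For the first-moment upper bound, I condition on $\omega(e)=p\in(p_c,p_c+\epsilon]$ (integration over $p$ contributes a factor $\epsilon$) and observe that $\{e\in G\}$ forces a 1-arm from an endpoint of $e$ to the origin through edges of weight at most $p$. By \eqref{compare for different p} this has probability $\asymp\pi(|e|)$, giving $\PP(e\in G,\,\omega(e)\in(p_c,p_c+\epsilon])\leq C\epsilon\pi(|e|)$; summation over $e\in B(n)$ yields $\EE|\mathcal{Y}_n(\epsilon)|\leq C\epsilon n^2\pi(n)$, matching the lower bound.

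For the $t$-th moment with $t\geq 2$, I expand $\EE|\mathcal{Y}_n(\epsilon)|^t$ as a sum over $t$-tuples $(e_1,\ldots,e_t)$ (the non-distinct contributions are lower order) and bound each joint probability by the van den Berg--Kesten inequality applied to the disjoint witnesses: for each $e_i$ the witness is the weight window $\{\omega(e_i)\in(p_c,p_c+\epsilon]\}$ together with a $p_i$-open arm from $e_i$ to the origin through edges of weight $\leq p_i$. This gives $\PP\big(\bigcap_i\{e_i\in G,\,\omega(e_i)\in(p_c,p_c+\epsilon]\}\big)\leq\prod_{i=1}^t C\epsilon\pi(|e_i|)$, and summation produces the desired $(\epsilon n^2\pi(n))^t$ bound. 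The main obstacle is that all witness arms pass through the origin and are therefore not a priori edge-disjoint; I would resolve this by a standard peeling argument that separates the $t$ arms into distinct arcs through a small box around $0$ and reattaches them via quasi-multiplicativity, with the resulting combinatorial loss $C^t$ absorbed into $\mathcal{C}_5(t)$.
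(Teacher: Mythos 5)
Your first-moment upper bound rests on the claim that an invaded edge $e$ with $\omega(e)=p$ must be joined to the origin by a one-arm of edges of weight at most $p$. This is false: the invasion reaches $e$ through edges that were minimal on the boundary at earlier times, and those can have weight much larger than $\omega(e)$ (indeed larger than $p_c+\epsilon$); in outlet language, $e$ is only guaranteed to be connected to the most recent outlet $\hat e$ by a path of weight $<\omega(\hat e)$, and $\omega(\hat e)$ can exceed $p_c+\epsilon$. The correct route (J\'arai's, which the paper follows) is to condition on $p_k(j)$-open circuits around the origin connected to infinity, for a sequence $p_k(j)\downarrow p_c$ tuned by the correlation length; on such an event an edge invaded in the annulus $A_k$ must lie in the infinite $p_k(j)$-open cluster, one pays $\theta(p_k(j))\lesssim \pi(2^k)(\log^{(j)}k)^{1/2}$ rather than $\pi(|e|)$, and the telescoping over $j$ recovers $\pi(2^k)$. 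The additional twist this lemma requires --- absent from your sketch --- is that these circuit events must be built to avoid the edge $e$ itself (the events $H^e_k(j)$), since otherwise one cannot factor out $\PP(\omega(e)\in(p_c,p_c+\epsilon])=\epsilon$ by independence. The same defect propagates to the $t$-th moment: the ``witnesses'' you feed to van den Berg--Kesten do not characterize $\{e_i\in G\}$, and that event is not increasing, so BK does not apply as stated.

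The lower bound has a more serious structural gap. Your conditions (b)+(c) ensure that $e$ is \emph{observed} once the invasion has absorbed the $p_c$-open cluster of the origin, but nothing forces $e$ to ever be the \emph{global} minimum of the boundary: that boundary is the entire outer boundary of the origin's $p_c$-cluster, which typically contains many edges of weight in $(p_c,\omega(e))$, and the invasion takes those first. Your condition (d) --- a $p_c$-open circuit joined to infinity by a $(p_c+\epsilon)$-open arm --- then actively works against you: it is exactly the device used elsewhere in the paper to guarantee that the invasion escapes and never returns, so on your event the invasion may well leave $e$ uninvaded forever. What is needed is the opposite: a $(p_c+\epsilon)$-closed dual circuit at a larger scale surrounding $e$ and the origin's cluster. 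Such a circuit traps the invasion and forces it to exhaust every edge of weight $\le p_c+\epsilon$ in the enclosed $(p_c+\epsilon)$-open cluster --- in particular $e$ --- before it can cross. This is condition (b) of the event $L_k(e)$ in the paper, whose probability stays uniformly positive for $\epsilon$ small enough that $p_c+\epsilon<p_{2^{k+2}}$, by RSW. With that replacement the rest of your lower-bound computation (independence of $\omega(e)$ from the construction, summation of $\pi(|e|)$ over an annulus) goes through.
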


Assuming this lemma for the moment, we can derive \eqref{eq: lower_bound_subsequence_reduction}. From Lemma~\ref{lemma: upperbound of small r}, we can choose $\mathcal{C}_{3}$ so that 
\begin{equation*}
\PP(r_{ \floor{ \mathcal{C}_{3} n^2 \pi(n)} } < 2n) \leq   \frac{\mathcal{C}_4^2}{16\mathcal{C}_5(2)} \text{ for all } n \geq 1.
\end{equation*}
On the event $\{ r_{\floor{ \mathcal{C}_{3} n^2 \pi(n)} } \geq 2n\} $, the IPC in $B(2n)$ does not change after time $\floor{ \mathcal{C}_3 n^2 \pi(n)}$. It follows that the number of invaded edges with $(p_c, p_c+\epsilon]$ until time $\floor{ \mathcal{C}_3 n^2 \pi(n) }$ is at least $\mathcal{Y}_{2n}(\epsilon)$, which is the number of invaded edges with $(p_c, p_c+\epsilon]$ in $B(2n)$. By Lemma~\ref{lemma: upperbound of small r}, Lemma~\ref{E(Y(B_n))} and the Cauchy-Schwarz inequality, if $\epsilon$ is sufficiently small,
\begin{equation*}
\begin{split}
\EE  \left[ \dsum_{j=1}^{ \floor{ \mathcal{C}_3 n^2 \pi(n)} } \bigg( X_j(p_c+\epsilon)-X_j(p_c) \bigg) \right] &\geq  \EE \left[  \dsum_{j=1}^{\floor{\mathcal{C}_3 n^2 \pi(n)} } \bigg( X_j(p_c+\epsilon)-X_j(p_c)  \bigg) \cdot \mathbf{1}_{\{ r_{ \floor{ \mathcal{C}_{3} n^2 \pi(n)} } \geq 2n \} }   \right]  \\
& \geq \EE\left[ \mathcal{Y}_{2n}(\epsilon)  \cdot \mathbf{1}_{\{r_{\floor{ \mathcal{C}_{3} n^2 \pi(n)} } \geq 2n\}}  \right]  \\
&\geq \frac{\mathcal{C}_4}{2} \epsilon (2n)^2 \pi(2n) - \EE\left[ \mathcal{Y}_{2n}(\epsilon) \cdot \mathbf{1}_{\{r_{\lfloor \mathcal{C}_3 n^2 \pi(n) \rfloor} < 2n\}} \right] \\
&\geq \frac{\mathcal{C}_4}{2} \epsilon (2n)^2 \pi(2n) - \sqrt{ \mathcal{C}_5(2) \left( \epsilon (2n)^2 \pi(2n)\right)^2 \frac{\mathcal{C}_4^2}{16\mathcal{C}_5(2)}} \\
&\geq \frac{\mathcal{C}_4 \epsilon (2n)^2 \pi(2n)}{4}.
\end{split}
\end{equation*}
 Combining this with \eqref{eq : modified definition of a_n(x)}, \eqref{bounded D_1}, and the fact that $ n \leq \EE[L_n] \leq 4n$, we obtain
\begin{equation*}
\begin{split}
a_{ \floor{ \mathcal{C}_3 n^2\pi(n) } }(p_c) =\dlim_{\epsilon \downarrow 0} \frac{\EE [ \tilde{Q}_{ \floor{ \mathcal{C}_3 n^2 \pi(n) } } (p_c+\epsilon)-\tilde{Q}_{ \floor{ \mathcal{C}_3 n^2 \pi(n) } } (p_c) ] }{\epsilon \EE[L_{ \floor{ \mathcal{C}_3 n^2 \pi(n) } } ]} 
&\geq \dlim_{\epsilon \downarrow 0} \frac{\mathcal{C}_4 \epsilon (2n)^2 \pi(2n) / 4}{4\epsilon \mathcal{C}_3 n^2 \pi(n)}\\
& = \frac{\CalC_4 D_1}{4 \CalC_3 \sqrt{2}},
\end{split}
\end{equation*}
which is positive uniformly in $n$. This shows \eqref{eq: lower_bound_subsequence_reduction}.

The last step is to prove Lemma~\ref{E(Y(B_n))}.
\begin{proof} [Proof of Lemma~\ref{E(Y(B_n))}] The proof of the upper bound is similar to that of J\'arai \cite[Theorem~1]{MR1981994}, which shows an upper bound for $|\mathcal{S}_n|$ (that result does not involve a condition on the weight $\omega(e)$) so we will omit some details. We will follow that proof, but make the events independent of $\omega(e)$ so that we can insert the condition $\omega(e) \in (p_c,p_c+\epsilon]$.

We will restrict to $n$ of the form $2^K$, as the general result follows from this and monotonicity of $\pi_n$. Let $A_{k}$ be Ann($2^{k}, 2^{k+1})$, and  $\mathcal{Y}_{A_k}$ be the number of IPC edges in Ann($2^k,2^{k+1})$ with the weight in $(p_c, p_c+\epsilon]$. Then, $B(n)=\cup_{k=1}^K A_k$ and $\mathcal{Y}_{n}(\epsilon)=\sum_{k=1}^K \mathcal{Y}_{A_k}$. Define a sequence $p_k(0) > p_k(1)> \cdots > p_c$ as follows. Let $\log^{(0)} k = k$, and let $\log^{(j)} k = \log(\log^{(j-1)} k)$ for $j \geq 1$ if the right-hand side is defined.
For $k > 10$, we define
\begin{equation*}
 \log^* k = \min \{j> 0 : \log^{(j)} k\;\text{ is defined and } \log^{(j)} k \leq 10\}.
\end{equation*}
Then $\log^{(j)}k > 2$, for $j = 0, 1, \cdots, \log^* k$ and $k> 10$. Let
\begin{equation*}
p_k(j) = \inf \left\{ p> p_c : L(p) \leq \frac{2^k}{C_{5} \log^{(j)} k} \right\},\;\; j = 0, 1, \cdots, \log^* k,
\end{equation*} 
where the constant $C_5$ will be chosen later. With \eqref{2.10 of Jarai} and \cite[Eq.~(2.15)]{MR1981994}, we get
\begin{equation} 
C_5 \log^{(j)} k \leq \frac{2^k}{L(p_k(j))} \leq D C_5 \log^{(j)} k \label{D}
\end{equation}
For any fixed $e \subset A_{k}$ we define
\begin{align}
& H_k(j)=\{\text{$\exists$ a $p_k(j)$-open circuit $\D$ around the origin in $A_{k-1}$ and $\D \xleftrightarrow{p_k(j)} \infty$}\} \nonumber\\
& H^e_k(j) = \{ \text{$H_k(j)$ occurs and $\D \xleftrightarrow{\;p_k(j)\;} \infty $ without using the edge $e$}\}. \label{Def of H^e_k(j)}
\end{align}
To give a lower bound for the probability of $H_k(j)$, J\'arai constructed an infinite $p_k(j)$-open path starting from $\partial B(2^k)$ using standard 2D constructions only to the right of $B(2^k)$. (See \cite[Fig 1]{MR1981994}). Similarly, to lower bound the probability of $H_k(j)^e$, we build, in addition to J\'arai's path, an infinite $p_k(j)$-open path starting from $\partial B(2^k)$ in the left of $B(2^k)$. The existence of such disjoint two infinite $p_k(j)$-open paths imply the event $\{\D \xleftrightarrow{\;p_k(j)\;} \infty \text{ without using $e$} \}$ for any fixed edge $e \in A_k$. As in \cite[Eq.~(2.17)]{MR1981994}, we obtain
\begin{equation} \label{lowerbounded of $H_k^e(j)$}
J_k(j) \cap \bigg( \bigcap_{m=0}^\infty J_{k,L}^m(j) \bigg) \cap \bigg( \bigcap_{m=0}^\infty J_{k,R}^m(j) \bigg) \subseteq H^e_k(j)
\end{equation}
where for $m \geq 0$,
\begin{equation*}
\begin{split}
&J_k=\{\text{$\exists$ a $p_k(j)$-open circuit in $A_{k-1}$}\} \\
&J_{k,R}^m=J_{k,R}^{m,h} \cap J_{k,R}^{m,v}, \; \text{and} \; J_{k,L}^m=J_{k,L}^{m,h} \cap J_{k,L}^{m,v}\\
&J_{k,R}^{m,h}=\{\text{$\exists$ a $p_k(j)$-open horizontal crossing of  $[2^{k-2+m},2^{k+m}] \times [-2^{k-2+m}, 2^{k-2+m}]$}\}\\
&J_{k,L}^{m,h}=\{\text{$\exists$ a $p_k(j)$-open horizontal crossing of  $[-2^{k+m},-2^{k-2+m}] \times [-2^{k-2+m}, 2^{k-2+m}]$}\}\\
&J_{k,R}^{m,v}=\{\text{$\exists$ a $p_k(j)$-open vertical crossing of  $[2^{k-1+m},2^{k+m}] \times [-2^{k-1+m}, 2^{k-1+m}]$}\}\\
&J_{k,L}^{m,v}=\{\text{$\exists$ a $p_k(j)$-open vertical crossing of  $[-2^{k+m}, -2^{k-1+m}] \times [-2^{k-1+m}, 2^{k-1+m}]$}\}.\\
\end{split}
\end{equation*}
By \eqref{2.8 of Jarai} and \eqref{D}, (See \cite[Eqs.~(2.19), (2.20)] {MR1981994}),
\begin{equation*}
\begin{split} \label{eq: $J_k(j)$ and $J_{k,R}^m etc}
& \PP(J_k(j)^c)   \leq 16C_3 \exp \left\{-\frac{1}{4}C_4 C_5 \log^{(j)} k \right\} \;\;\text{ and }\\
& \PP(J_{k,R}^m(j)^c \cup J_{k,L}^m(j)^c) \leq 4C_3 \exp \left\{-\frac{1}{2}C_4 C_5 2^m \log^{(j)} k \right\}.
\end{split}
\end{equation*}
By these inequalities, one gets
\begin{align*}  
\PP(H_k^e(j)^c) &\leq \PP \left( J_k(j) \right)^c +\dsum_{m=0}^\infty \PP \left( (J_{k,R}^m(j)^c \cup J_{k,L}^m(j)^c \right)  \\
&\leq (16C_3+C_{6})\exp \left\{-\frac{1}{4}C_4 C_5 \log^{(j)}k \right\}.
\end{align*}
We write $C_{7}$ as $16C_3+C_{6}$ and $c_1$ as $\frac{C_4 C_5}{4}$ for short. Then,
\begin{equation} 
\PP(H_k^e(j)^c) \leq C_{7} \exp\{ - c_1 \log^{(j)}k \} \label{prob(H_k)}.
\end{equation}
 The constant $c_1$ can be made large by choosing $C_5$ large. 
 
 To estimate the mean of $\mathcal{Y}_{A_k}$, we decompose
\begin{equation}
\EE\mathcal{Y}_{A_k}=\EE\left[\mathcal{Y}_{A_k} ; H_k(0)^c \right] + \bigg( \dsum_{j=1}^{\log^*k} \EE\left[\mathcal{Y}_{A_k} ; H_k(j-1) \cap H_k(j)^c \right] \bigg) +  \EE\left[\mathcal{Y}_{A_k} ; H_k(\log^*k)\right].  \label{split of H_k(j)}
\end{equation}
By (\ref{prob(H_k)}) and independence,
\begin{equation} \label{first of A_k}
\begin{split}
 \EE \left[ \mathcal{Y}_{A_k} ; H_k(0)^c \right] & \leq \EE \left[ \mathcal{Y}_{A_k} ; H_k^e(0)^c \right] \leq  \dsum_{e \in A_k} \PP(\omega(e) \in (p_c , p_c +\epsilon], H_k^e(0)^c)\\
&=|A_k| \PP(\omega(e) \in (p_c , p_c +\epsilon]) \PP(H_k^e(0)^c) \\
&\leq |A_k|\epsilon C_{8} e^{-c_1 k}.
\end{split}
\end{equation}
Next, since $\omega(e)$ is independent of $H_k^e(j) \cap \{e \xleftrightarrow{p_k(j)} \infty \}$,
\begin{equation*}  \label{eq: second term of $E[y_{A_k}]$}
\begin{split}
 \EE|\mathcal{Y}_{A_k} ; H_k(j-1) \cap H_k(j)^c|  &=  \dsum_{e \in A_k} \PP( \omega(e) \in (p_c , p_c +\epsilon]  \cap \{ e \xleftrightarrow{p_k(j-1)} \infty \}  \cap H_k^e(j)^c)\\
 &= \epsilon \dsum_{e \in A_k} \PP(e \xleftrightarrow{p_k(j-1)} \infty, H_k^e(j)^c).
\end{split}
\end{equation*}
Applying the FKG inequality and \eqref{prob(H_k)} to this, we obtain
\begin{equation}  \label{second of A_k}
\begin{split}
\EE[\mathcal{Y}_{A_k} ; H_k(j-1) \cap H_k(j)^c] \leq |A_k| \epsilon  \theta(p_k(j-1)) \; C_{7} \exp\{- c_1 \log^{(j)}k \}.\\
\end{split}
\end{equation}
The third term of (\ref{split of H_k(j)}) is bounded above by 
\begin{equation}  \label{third of A_k}
 |A_k|\epsilon \theta(p_k(\log^*k)).
\end{equation}
Using \eqref{bound pi for bigger than critical},  \eqref{bounded D_1} and \eqref{D},
\begin{align*}
\theta(p_k(j)) \leq \frac{ \pi(2^k)}{D_1} (DC_5 \log^{(j)}k)^{1/2}. \label{eq: upperbound_theta p_k(j)}
\end{align*}
Applying this inequality after placing \eqref{first of A_k}, \eqref{second of A_k}, and \eqref{third of A_k} into \eqref{split of H_k(j)}, we obtain
\begin{equation*} \label{eq: upperbound of the number of invaded edges in box}  
\begin{split}
\EE\mathcal{Y}_{A_k}\leq C_{9} |A_k| \epsilon \pi(2^k) \left[ \frac{\exp\{-c_{1} k\}}{\pi(2^k)} + \left\{\dsum_{j=1}^{\log^*k} (\log^{j-1}k)^{1/2-c_{1} } \right\} +1 \right].
\end{split}
\end{equation*}
Since $\pi(2^k) \geq C_{10}2^{-k/2}$ from \eqref{bounded D_1}, we can choose $C_5$ (and therefore $c_1$) so large that 
\begin{align}
\frac{\exp\{-c_{1} k\}}{\pi(2^k)} + \left\{\dsum_{j=1}^{\log^*k} (\log^{j-1}k)^{1/2-c_{1} } \right\} +1 \;\;\text{ is bounded in $k$,} \nonumber
\end{align}
and so
$\EE\mathcal{Y}_{A_k} \leq C_{11} \epsilon 2^{2k} \pi(2^k).$
Recalling $n= 2^K$, we obtain from this and \eqref{bounded D_1} that
\begin{equation*}  
\begin{split}
\EE\mathcal{Y}_n(\epsilon) =  \dsum_{k=1}^K \EE\mathcal{Y}_{A_k} \leq C_{11}   \epsilon 2^{2K} \pi(2^K) \dsum_{k=1}^K \frac{    2^{2k} \pi(2^k)}{   2^{2K} \pi(2^K)} & \leq \frac{C_{11}}{D_1}   \epsilon 2^{2K} \pi(2^K) \dsum_{k=1}^K 2^{2(k-K)} 2^{-\frac{1}{2}(k-K)}\\
& \leq C_{12} \epsilon n^2 \pi(n),
\end{split}
\end{equation*}
completing the proof of the upper bound when $t=1$. The extension to larger $t$ uses the same ideas as in \cite{MR1981994} and \cite[Sec.~3]{MR859839}, so we omit it.


We now turn to the lower bound. For $k \geq 1$, $\epsilon>0$, and any $e \subset A_k$, we let $L_k(e)$ be the event that the following hold:

\begin{enumerate}[(a)]
\item There exists a $p_c$-open circuit $\mathcal{D}$ around the origin in $A_{k-2}$.
\item There exists a $(p_c+\epsilon)$-closed dual circuit around the origin in $A_{k+2}$.
\item  $\mathcal{D}$ is connected to the edge $e \in A_k$ by a $p_c$-open path in $B(2^k)$.
\end{enumerate}

\begin{figure}[!h] 
\centering
\includegraphics[width=90mm]{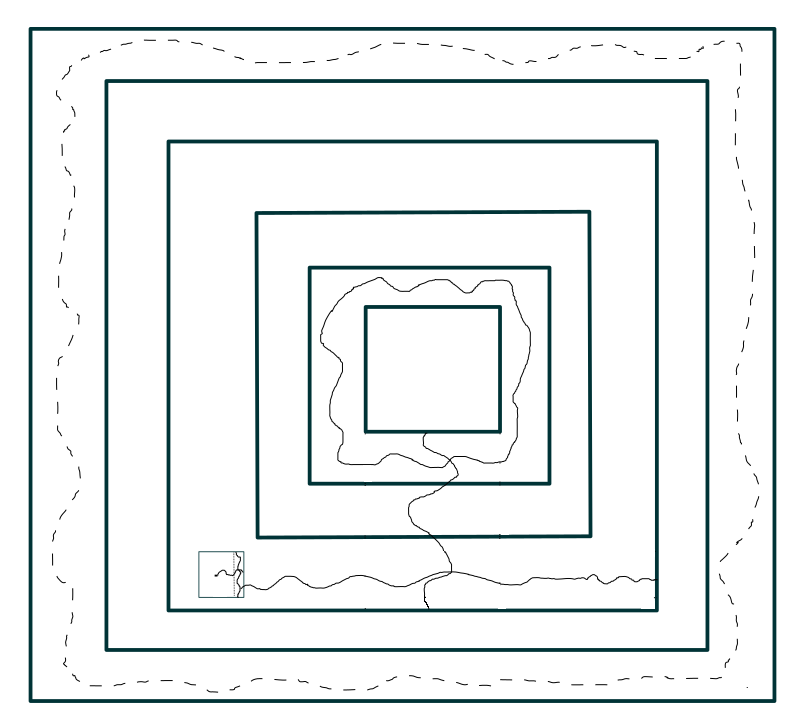}
\caption{The event $L_k(e)$. The boxes, in order from smallest to largest, are $B(2^{k-2})$, $B(2^{k-1})$, $B(2^k)$, $B(2^{k+1})$, $B(2^{k+2})$, and $B(2^{k+3})$. The solid curves are $p_c$-open and the dotted curve is a $(p_c+\epsilon)$-closed dual circuit.}\label{lowerboundofY}
\end{figure}
(See Figure~\ref{lowerboundofY} for an illustration of $L_k(e)$).

If the events described in (a) and (b) both occur, each $(p_c+\epsilon)$-open edge connected to $\mathcal{D}$ by a $(p_c+\epsilon)$-open path will eventually be invaded. Since the event in (b) depends on edge-variables for edges outside of $B(2^{k+1})$, (b) is independent of both (a) and (c). In addition, the events (a) and (c) are increasing. So, by the FKG inequality and the RSW theorem,
\[
\PP(L_k(e)) \geq \PP( (a))  \times \PP( (b)) \times \PP( (c))  \geq C_{13} \PP((b)) \times \PP( (c)).
\]
By a gluing argument \cite[Ch.~11]{grimmett_percolation} using the FKG inequality and the RSW theorem, $\PP( (c) ) \geq C_{14} \pi(2^k)$. Furthermore, as long as $\epsilon$ is so small that $p_c + \epsilon < p_{2^{k+2}}$, then the RSW theorem implies that $\PP((b)) \geq C_{15}$. This means that for such $\epsilon$, one has $\PP(L_k(e)) \geq C_{13}C_{14}C_{15}\pi(2^k)$. Since $\omega(e)$ and the event $L_k(e)$ are independent,
\begin{equation*} 
\begin{split}
\EE|\mathcal{Y}_{A_k}| = \dsum_{e \in A_k} \PP(e \in \text{IPC},  \omega(e) \in (p_c, p_c+\epsilon]) &\geq \dsum_{e \in A_k} \PP( L_k(e),  \omega(e) \in (p_c, p_c+\epsilon]) \\
& \geq C_{16} \epsilon 2^{2k} \pi(2^{k}).
\end{split}
\end{equation*}
For a given $n \geq 1$, choose $k = \lfloor \log_2 n \rfloor$ to complete the proof:
 \[
\EE \mathcal{Y}_n(\epsilon) \geq  \EE \mathcal{Y}_{A_k} \geq C_{16} \epsilon 2^{2k} \pi(2^k) \geq C_{17}\epsilon n^2 \pi(n).
\]
\end{proof}

\subsection{Upper bound} \label{sec: upper bound}
In this section, we show that
\begin{equation}
\limsup_{n \to \infty} a_n(p_c) < 1. \label{eq: to_show_upper_bound}
\end{equation}

To prove \eqref{eq: to_show_upper_bound}, we define
%
%
\[
\Xi_n(\epsilon) = \left[ \tilde{P}_n(p_c+\epsilon) - \tilde{P}_n(p_c) \right] - \left[ \tilde{Q}_n(p_c+\epsilon) - \tilde{Q}_n(p_c)\right],
\]
as the number of edges with weight in the interval $(p_c,p_c+\epsilon]$ which the invasion observes until time $n$ but does not invade, and we give the following proposition.
\begin{proposition} \label{upper theorem 1} There exists $\mathcal{C}_6 >0$ and a function $G$ on $[0, \infty)$ with $\inf_{r \in [0,m]} G(r) > 0$ for each $m \geq 0$ such that for any $C \geq \mathcal{C}_6$, any $n \geq 1$, and any $\epsilon>0$,
\[
\EE \Xi_{\lfloor C n^2 \pi(n) \rfloor}(\epsilon) \geq G(C) \epsilon n^2 \pi(n).
\]
 \end{proposition}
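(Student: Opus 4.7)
The plan is to mirror the structure of the lower bound: use Lemma~\ref{lemma: upperbound of small r} to reduce to a static statement about the full IPC $G$ inside $B(n)$, construct a local event that forces the chosen edge to be observed but never invaded, and combine first- and second-moment estimates via Cauchy--Schwarz. Writing $t_n := \lfloor C n^2 \pi(n)\rfloor$, on the event $\{r_{t_n}\ge 2n\}$ the invasion in $B(2n)$ is frozen after time $t_n$, so for every $e\in B(n)$ the property ``observed-but-not-invaded by time $t_n$'' coincides with ``$e\in\partial G$''. Setting $Y:=|\{e\in\partial G\cap B(n):\omega(e)\in(p_c,p_c+\epsilon]\}|$, Cauchy--Schwarz gives
\[
\EE\,\Xi_{t_n}(\epsilon)\;\ge\;\EE Y-\sqrt{\EE Y^2\cdot\PP(r_{t_n}<2n)},
\]
so it is enough to establish $\EE Y\ge c_1\epsilon n^2\pi(n)$ and $\EE Y^2\le c_2(\epsilon n^2\pi(n))^2$ uniformly in $\epsilon,n$ and then pick $\mathcal{C}_6$ using Lemma~\ref{lemma: upperbound of small r} so that $\PP(r_{t_n}<2n)\le c_1^2/(4c_2)$ for all $C\ge\mathcal{C}_6$; this yields the proposition with $G\equiv c_1/2$.

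For the first-moment estimate I would mimic the event $L_k(e)$ in the proof of Lemma~\ref{E(Y(B_n))}, replacing the outer trapping configuration by an outward $p_c$-open \emph{bypass} that carries the invasion to infinity without ever needing $e$. For $e\in A_k$, let $m=m(k,\epsilon)\ge 0$ be the smallest integer with $p_{2^{k+m}}\le p_c+\epsilon/2$, and let $M_k(e)$ be the intersection of: (i) $\omega(e)\in(p_{2^{k+m}},p_c+\epsilon]$, an interval of length $\ge\epsilon/2$; (ii) a $p_c$-open circuit $\mathcal{D}_1$ around $0$ in $\text{Ann}(2^{k-2},2^{k-1})$ that is $p_c$-open connected to $e_x$; (iii) a $p_c$-open circuit $\mathcal{D}_2$ around $0$ in $\text{Ann}(2^{k+m},2^{k+m+1})$, joined to $\mathcal{D}_1$ by a $p_c$-open path in the region between them that avoids $e$; and (iv) $\mathcal{D}_2$ is $p_{2^{k+m}}$-open connected to $\infty$. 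Dynamically, on $M_k(e)$ the invasion absorbs $\mathcal{D}_1$'s $p_c$-open cluster (hence observes $e$), then grows through the $p_c$-open bypass of (iii), and eventually crosses an outlet of weight $\le p_{2^{k+m}}$ into the infinite $p_{2^{k+m}}$-open cluster from (iv); at every step up to and past this crossing the minimum-weight boundary edge is at most $p_{2^{k+m}}<\omega(e)$, so $e$ is never invaded and $e\in\partial G$. Independence of $\omega(e)$, FKG, RSW, and \eqref{bound pi for bigger than critical} applied at scale $2^{k+m}$ (where $p_{2^{k+m}}$ is slightly supercritical) give $\PP(M_k(e))\gtrsim\epsilon\pi(2^k)$ uniformly in $k,\epsilon,n$. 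Summing over $e\in A_k\cap B(n)$ and $k\le\log_2 n$ as in the lower-bound summation of Lemma~\ref{E(Y(B_n))} then yields $\EE Y\ge c_1\epsilon n^2\pi(n)$.

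The second-moment bound $\EE Y^2\le c_2(\epsilon n^2\pi(n))^2$ should follow from an adaptation of the upper bound argument in Lemma~\ref{E(Y(B_n))}, applied to the event ``$e$ has an endpoint in $G$'' in place of ``$e$ is invaded'': the scale-hierarchy decomposition using $(p_k(j))$ goes through unchanged because an edge of $\partial G$ still carries a single $p_c$-open arm from one of its endpoints into the IPC. The main obstacle is the probability estimate for $M_k(e)$: verifying that (iv) holds with conditional probability bounded below by a universal constant once $\mathcal{D}_2$ exists (via \eqref{bound pi for bigger than critical} together with the Jarai-style infinite $p_{2^{k+m}}$-open path construction of \cite[Fig.~1]{MR1981994} at scale $2^{k+m}$), and keeping the lower bound $\PP(M_k(e))\gtrsim\epsilon\pi(2^k)$ truly uniform in $k$ and $\epsilon$ despite the scale-dependent $m(k,\epsilon)$.
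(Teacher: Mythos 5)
Your reduction via Lemma~\ref{lemma: upperbound of small r} is sound as far as it goes, but the static quantity you reduce to cannot deliver the proposition: the bound $\EE Y \ge c_1\epsilon n^2\pi(n)$ with $Y=\#\{e\in\partial G\cap B(n):\omega(e)\in(p_c,p_c+\epsilon]\}$ is false in exactly the regime that matters, namely $\epsilon\downarrow 0$ with $n$ fixed (the proposition must hold for \emph{all} $\epsilon>0$ because $a_n(p_c)$ is defined by sending $\epsilon\downarrow 0$ first). Indeed, suppose $e$ is observed at some finite time $\tau$ and never invaded. Whenever an outlet $\hat e_j$ is invaded after time $\tau$, it is the minimal boundary edge at that moment while $e$ is still on the boundary, so necessarily $\omega(e)>\omega(\hat e_j)$; since a.s. infinitely many outlets are invaded after $\tau$ and each has weight strictly above $p_c$ (as $\theta(p_c)=0$), the event $\{e\in\partial G\}$ forces $\omega(e)$ to exceed a threshold that is a.s. strictly larger than $p_c$. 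Hence $\PP(e\in\partial G,\ \omega(e)\in(p_c,p_c+\epsilon])=o(\epsilon)$ as $\epsilon\downarrow 0$ for each fixed $e$, and $\EE Y/\epsilon\to 0$ for each fixed $n$. Your own construction is telling you this: $M_k(e)$ must reach out to scale $2^{k+m(k,\epsilon)}\approx L(p_c+\epsilon/2)$ to find an unending supply of edges cheaper than $\omega(e)$, and the $p_c$-open bypass from scale $2^k$ to scale $2^{k+m}$ costs on the order of $\pi(2^{k+m})/\pi(2^k)\to 0$ as $\epsilon\downarrow 0$. The ``main obstacle'' you flag is therefore not a removable technicality; it is the signature of the fact that, as $\epsilon \downarrow 0$, the full invasion eventually accepts essentially all observed edges with weight in $(p_c,p_c+\epsilon]$, so ``observed but never invaded'' is the wrong event to count.

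The paper's proof is built precisely to avoid this. It counts edges that are observed but not yet invaded \emph{at time} $t_n$, most of which will be invaded later: the event $D(n)$ produces a $q_n$-open circuit $\mathcal{C}_\ast$ near scale $n$ whose $p_c$-open cluster is forced, through a single gate edge $f$ with $\omega(f)\in(q_n,p_c)$ surrounded by a $p_c$-closed dual circuit, to contain a circuit $D_\ast$ in Ann$(8n,16n)$ together with more than $Cn^2\pi(n)$ further vertices outside $B(16n)$ (arranged by a Paley--Zygmund argument). On $\{t_{D_\ast}\le t_n<T_{D_\ast}\}$ the invasion has entered this cluster but is still busy absorbing it at time $t_n$, so every $p_c$-closed edge hanging off the $q_n$-cluster of $\mathcal{C}_\ast$ has been observed and temporarily passed over; a decoupling device (the $(U^1_e,U^2_e,\eta_e)$ representation in Lemma~\ref{lem: percolation_cluster}) then extracts the factor $\epsilon$ from the weight of such an edge without disturbing the dynamics. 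If you want to salvage your outline, you must replace ``never invaded'' by ``not invaded before the invasion has completed a prescribed amount of work,'' which is exactly where the stopping times $t_{D_\ast}\le t_n<T_{D_\ast}$ and the size bound $Z(\hat D_\ast)>Cn^2\pi(n)$ enter.
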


Assuming Proposition~\ref{upper theorem 1} for the moment, let $C \geq \mathcal{C}_6$, and use $\mathbb{E}L_n \leq 4n$ for
\begin{align}
a_{\floor{ C n^2\pi(n) } }(p_c) = \dlim_{\epsilon \downarrow 0} \frac{ \EE \left[  
\tilde{Q}_{ \floor{ C n^2 \pi(n) }} (p_c+\epsilon)- \tilde{Q}_{ \floor{ C n^2 \pi(n) } } (p_c) \right]}{\EE \left[\tilde{P}_{ \floor{ C n^2 \pi(n) } } (p_c+\epsilon) - \tilde{P}_{ \floor{ C n^2 \pi(n)} } (p_c)  \right]} &= \dlim_{\epsilon \downarrow 0} \left( 1 - \frac{ \EE \Xi_{\lfloor Cn^2\pi(n)\rfloor}(\epsilon)}{ \EE L_{\lfloor C n^2 \pi(n) \rfloor}} \right) \nonumber \\
&\leq \dlim_{\epsilon \downarrow 0}  \bigg( 1- \frac{ G(C) \epsilon n^2 \pi( n) }{ 4 C \epsilon  n^2 \pi(n)} \bigg) \nonumber \\
&= 1- \frac{G(C) }{4 C }. \label{eq: G_lower_bound}
\end{align}
Now note that any $n \geq \mathcal{C}_6$ can be written in the form $\lfloor C h^2 \pi(h) \rfloor$ for some integer $h \geq 1$ and some $C \in [\mathcal{C}_6, 4\mathcal{C}_6]$. To see why, observe that any $n \geq \mathcal{C}_6$ is in some interval of the form $[\mathcal{C}_6 h^2 \pi(h), \mathcal{C}_6 (h+1)^2 \pi(h+1))$ for some $h \geq 1$ (since $h^2 \pi(h) \to \infty$ as $h \to \infty$ by \eqref{bounded D_1}). Then because
\[
\frac{\mathcal{C}_6 (h+1)^2 \pi(h+1)}{\mathcal{C}_6 h^2 \pi(h)} = \left( 1+ \frac{1}{h}\right)^2 \frac{\pi(h+1)}{\pi(h)} \leq 4,
\]
we see that $n = \lfloor C_\ast \mathcal{C}_6 h^2 \pi(h) \rfloor$ for some $C_\ast \in [1,4]$. By \eqref{eq: G_lower_bound}, then, we obtain
\[
a_n(p_c) \leq 1- \frac{\inf_{r \in [\mathcal{C}_6,4\mathcal{C}_6]} G(r)}{4 \mathcal{C}_6},
\]
and this implies \eqref{eq: to_show_upper_bound}.


In the remainder of this section, we prove Proposition~\ref{upper theorem 1}. 
%
%
%
\begin{proof}[Proof of Proposition~\ref{upper theorem 1}]
For notational convenience, let $t_n = \lfloor C n^2 \pi(n) \rfloor$. To prove a lower bound on $\Xi_{t_n}(\epsilon)$, we will construct a large $p_c$-open cluster such that with positive probability, independent of $n$, the invasion has intersected this cluster at time $t_n$ and has explored a positive fraction of its boundary edges, but has not yet absorbed the entire cluster. These explored boundary edges will have probability of order $\epsilon$ to have weight in the interval $(p_c,p_c+\epsilon]$, so our lower bound on $\EE \Xi_{t_n}(\epsilon)$ will be of order $\epsilon$ times the size of this explored boundary, which will itself be of order $n^2\pi(n)$.

To construct this cluster, we need several definitions.

\begin{definition} \label{definiton: D(n)}
Define the event $D(n)$ that the following conditions hold:
\begin{enumerate}
\item There exists a $q_n$-open circuit around the origin in Ann($n,2n$).
\item There exists an edge $f \in$ Ann$(6n, 7n)$ with $\omega(f) \in (q_{n}, p_c)$ such that:
\begin{enumerate}
\item there exists a $p_c$-closed dual path $P$ around the origin in Ann($4n,8n)^*  \backslash \{f^*\}$ that is connected to the endpoints of $f^*$ so that $P \cup \{f^*\}$ is a dual circuit around the origin, and
\item there exists a $p_c$-open path connecting an endpoint of $f$ to $B(n)$, and another disjoint $p_c$-open path connecting the other endpoint of $f$ to $\partial B(16n)$.
\end{enumerate}
\item There exists a $p_c$-open circuit around the origin in Ann($8n, 16n)$.
\end{enumerate}
For $e \subset Ann(2n,4n)$, define $D^e(n)$ as the event that $D(n)$ occurs without using the edge $e$. (That is, $D(n)$ occurs and the first connection listed in 2(b) does not use $e$.)
\end{definition}
See Figure~\ref{fig: D(n)} for an illustration of $D(n)$.

\begin{figure}[h] 
\centering
\includegraphics[width=85mm]{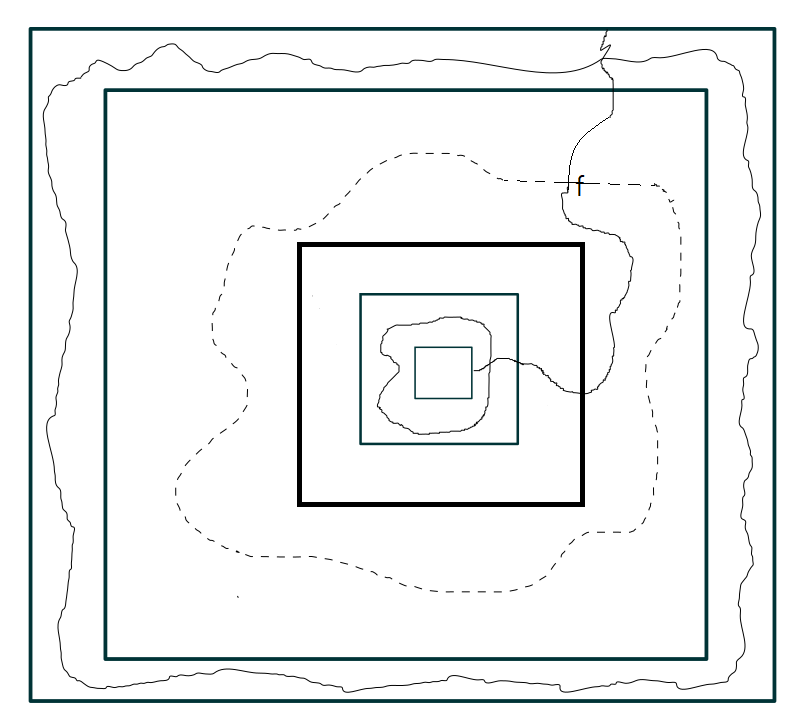}
\caption{The event $D(n)$. The boxes, in order from smallest to largest, are $B(n)$, $B(2n)$, $B(4 n)$, $B(8n)$ and $B(16n)$. The solid circuit in Ann($n,2n$) is $q_n$-open and the path from $\partial B(n)$ to $f$ is $p_c$-open; the dotted dual path in Ann($4n, 8n$) is $p_c$-closed, $\omega(f) \in (q_{n}, p_c)$, and the other solid paths are $p_c$-open.}
\label{fig: D(n)}
\end{figure}

When the event $D(n)$ occurs, we can define $\mathcal{C}_\ast$ as the innermost $q_n$-open circuit around the origin in $Ann(n,2n)$ and $D_\ast$ as the outermost $p_c$-open circuit around the origin in $Ann(8n,16n)$. Note that on $D(n)$, the circuits $\mathcal{C}_\ast$ and $D_\ast$ are part of the same $p_c$-open cluster; this will form part of our ``large cluster'' referenced above. We need to make sure that we have started to invade this cluster, but are not yet done at time $t_n$, so we define stopping times
\begin{align*}
t_{D_\ast} &= \text{ first time at which the invasion invades an edge from $D_\ast$} \\
T_{D_\ast} &= \text{ first time at which the invasion invades the entire $p_c$-open cluster of $D_\ast$}.
\end{align*}
Note that on $D(n)$, we have $t_{D_\ast} \leq T_{D_\ast}$ and trivially,
\begin{equation}\label{eq: step_1}
\EE \Xi_{t_n}(\epsilon) \geq \EE \Xi_{t_n}(\epsilon) \mathbf{1}_{D(n) \cap \{t_{D_\ast} \leq t_n < T_{D_\ast}\}}.
\end{equation}
The next lemma shows that on the events listed on the right, $\Xi_{t_n}(\epsilon)$ is, on average, at least order $\epsilon$ times the cardinality of a certain subset of the edge boundary of the $p_c$-open cluster of $D_\ast$. For this we define the size $Y_n$ of this subset:
\[
Y_n =\#\{e \subset Ann(2n,4n) : \omega(e) > p_c, e \xleftrightarrow{q_{n}} \partial B(n) \text{ in }B(4n)\}.
\]
\begin{lemma}\label{lem: percolation_cluster}
For any $n \geq 1$,
\[
\EE \Xi_{t_n}(\epsilon) \mathbf{1}_{D(n) \cap \{t_{D_\ast} \leq t_n < T_{D_\ast}\}} \geq \frac{\epsilon}{1-p_c} \EE Y_n \mathbf{1}_{D(n) \cap \{t_{D_\ast} \leq t_n < T_{D_\ast}\}}.
\]
\end{lemma}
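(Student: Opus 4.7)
The plan is to expand both sides edge by edge. Writing $A_e$ for the event (measurable with respect to $\omega^e := (\omega(f))_{f \neq e}$) that there is a $q_n$-open path in $B(4n)$ from an endpoint of $e$ to $\partial B(n)$ avoiding $e$, we have $Y_n = \sum_{e \subset \text{Ann}(2n,4n)} \mathbf{1}_{A_e}\mathbf{1}_{\{\omega(e) > p_c\}}$. Since $\omega(e) > p_c$ forces $e$ to lie on no $p_c$-open path, $D(n) \cap \{\omega(e) > p_c\} = D^e(n) \cap \{\omega(e) > p_c\}$, and $D^e(n)$ is $\omega^e$-measurable. The heart of the proof is the claim that, on the event
\[
F_e := A_e \cap D^e(n) \cap \{\omega(e) > p_c\} \cap \{t_{D_\ast} \leq t_n < T_{D_\ast}\},
\]
the edge $e$ is observed but not invaded at time $t_n$.

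To prove the claim, let $K$ denote the $p_c$-open cluster of $D_\ast$ (which is $\omega^e$-measurable because $e \notin K$), let $Q \subseteq K$ denote the $q_n$-open connected component of $\mathcal{C}_\ast$ in $B(4n)$, and let $t_K$ be the first time the invasion invades a $K$-edge. By $A_e$ and planarity (the witnessing $q_n$-path reaches $\partial B(n)$, so crosses $\mathcal{C}_\ast$), the first edge $e'$ of this path at $e$'s endpoint lies in $Q$ and satisfies $\omega(e') \leq q_n$. For the \emph{not invaded} part: during $[t_K, T_{D_\ast})$ the invaded portion of $K$ is a proper subset, so by connectivity of $K$ the invasion's boundary always contains a $K$-edge of weight $\leq p_c$, strictly preferred over $e$ (weight $> p_c$); and at any step before $t_K$ at which $e$ is already on the boundary, the invasion has necessarily touched a vertex of $\mathcal{C}_\ast$ (any neighbor of $e \in \text{Ann}(2n,4n)$ is strictly outside $\mathcal{C}_\ast$, which separates the origin from this annulus), so that the boundary contains a $\mathcal{C}_\ast$-edge of weight $\leq q_n < \omega(e)$. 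For the \emph{observed} part: once the invasion touches $\mathcal{C}_\ast$, it invades all of $Q$ (finite, being contained in $B(4n)$) before any edge of weight $> q_n$, by connectivity of $Q$ and the strict weight preference; since the bridge edge $f$ provided by $D^e(n)$ satisfies $\omega(f) > q_n$ and is the unique $p_c$-open crossing of $\text{Ann}(4n, 8n)$ (enforced by the $p_c$-closed dual circuit in $D^e(n)$), the invasion must invade $f$ before reaching $D_\ast$, and so all of $Q$---including $e'$---is invaded before time $t_{D_\ast} \leq t_n$, placing $e$ on the boundary at $t_n$.

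Given the claim, since $e$ is not invaded on $F_e$, the invasion trajectory up to $t_n$ is $\omega^e$-measurable there; moreover, the same argument shows that the ``not invaded'' conclusion holds for \emph{every} $\omega(e) > p_c$ whenever the $\omega^e$-only invasion satisfies $t_{D_\ast} \leq t_n < T_{D_\ast}$, so there is an $\omega^e$-measurable event $M_e$ with $F_e = A_e \cap D^e(n) \cap \{\omega(e) > p_c\} \cap M_e$. Because $\omega(e)$ is uniform on $(0, 1]$ and independent of $\omega^e$, this gives
\[
\PP\bigl(\{\omega(e) \in (p_c, p_c + \epsilon]\} \cap F_e\bigr) = \frac{\epsilon}{1 - p_c}\, \PP(F_e) = \frac{\epsilon}{1 - p_c}\, \PP\bigl(A_e, D(n), \omega(e) > p_c, t_{D_\ast} \leq t_n < T_{D_\ast}\bigr).
\]
Combined with the inclusion
\[
\{\omega(e) \in (p_c, p_c + \epsilon]\} \cap F_e \subset \{\omega(e) \in (p_c, p_c + \epsilon],\ e \text{ obs.\ not inv.\ at } t_n\} \cap D(n) \cap \{t_{D_\ast} \leq t_n < T_{D_\ast}\}
\]
and summed over $e \subset \text{Ann}(2n, 4n)$, this yields the lemma. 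The main obstacle is the observation part of the claim: showing that $Q$ (and in particular $e'$) is fully invaded before the invasion reaches $D_\ast$ requires carefully combining the invasion's strict preference for the $q_n$-low edges in $Q$ with the topological role of $\mathcal{C}_\ast$ as a barrier and of $f$ as the forced $p_c$-open crossing of $\text{Ann}(4n, 8n)$.
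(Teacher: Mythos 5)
Your proof is correct and follows essentially the same two-step strategy as the paper's: first showing that on $D(n)\cap\{t_{D_\ast}\le t_n<T_{D_\ast}\}$ each edge counted in $Y_n$ is observed but not invaded by time $t_n$ (via the same use of $\mathcal{C}_\ast$ as a $q_n$-open barrier and of the $p_c$-open cluster of $D_\ast$), and then decoupling the value of $\omega(e)$ on $(p_c,1]$ from the rest of the configuration to extract the factor $\epsilon/(1-p_c)$. The only cosmetic difference is in the decoupling step: the paper builds an explicit auxiliary invasion $(\hat G_n)$ from a three-component representation of the weights, whereas you argue directly that the trajectory up to $T_{D_\ast}$ is insensitive to the value of $\omega(e)$ on $(p_c,1]$ --- the same idea, formalized differently (and your minor overstatement that the invasion must cross through $f$ itself, rather than merely through some weight-$>q_n$ edge of the dual circuit, is shared with, and as easily repaired as, the paper's version).
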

\begin{proof}
First we let
\[
\hat{Y}_n = \#\{e \subset Ann(2n,4n) : \omega(e) \in (p_c,p_c+\epsilon], e \xleftrightarrow{q_{n}} \partial B(n) \text{ in }B(4n)\}.
\]
On the event $D(n) \cap \{t_{D_\ast} \leq t_n < T_{D_\ast}\}$, any edge in the set which defines $\hat{Y}_n$ will be observed by the invasion until time $t_n$ but will not be invaded (that is, it is counted in the definition of $\Xi_n(\epsilon)$). To see why, let $e$ be an edge in the set which defines $\hat{Y}_n$. First, we must show that $e$ is not invaded at time $t_n$. This is because, in order for the invasion to even observe $e$, it must first pass through the circuit $\mathcal{C}_\ast$. Since $\omega(e) > p_c$, the invasion will invade the entire $p_c$-open cluster of $\mathcal{C}_\ast$ (which equals the $p_c$-open cluster of $D_\ast$) before it invades $e$. Since $t_n < T_{D_\ast}$, $e$ cannot be invaded at time $t_n$. Second, we must show that $e$ is observed by time $t_n$. The reason is that since $t_{D_\ast} \leq t_n$, at time $t_n$, the invasion has already invaded an edge from $D_\ast$. Since $D(n)$ occurs, the edge $f$ must therefore be invaded before time $t_{D_\ast}\leq t_n$. Before $f$ can be invaded, the entire $q_n$-open cluster of $\mathcal{C}_\ast$ must be invaded, so at least one endpoint of $e$ is in the invasion at time $t_n$. This means that $e$ is observed by time $t_n$. In conclusion,
\begin{align*}
&\EE \Xi_{t_n}(\epsilon) \mathbf{1}_{D(n) \cap \{t_{D_\ast} \leq t_n < T_{D_\ast}\}} \\
\geq~& \EE \hat{Y}_n \mathbf{1}_{D(n) \cap \{t_{D_\ast} \leq t_n < T_{D_\ast}\}} \\
=~& \sum_{e \subset \text{Ann}(2n,4n)} \mathbb{P}(\omega(e) \in (p_c,p_c+\epsilon], e \xleftrightarrow{q_n} \partial B(n) \text{ in } B(4n), D(n), t_{D_\ast} \leq t_n < T_{D_\ast}).
\end{align*}

The second and final step is to show that for all $e \subset \text{Ann}(2n,4n)$, we have
\begin{equation}\label{eq: perc_cluster}
\begin{split}
&\mathbb{P}(\omega(e) \in (p_c,p_c+\epsilon], e \xleftrightarrow{q_n} \partial B(n) \text{ in } B(4n), D(n), t_{D_\ast} \leq t_n < T_{D_\ast}) \\
=~& \frac{\epsilon}{1-p_c} \mathbb{P}(\omega(e) >p_c, e \xleftrightarrow{q_n} \partial B(n) \text{ in } B(4n), D(n), t_{D_\ast} \leq t_n < T_{D_\ast}).
\end{split}
\end{equation}
Once this is done, we can sum the right side and obtain the statement of the lemma.

To argue for \eqref{eq: perc_cluster}, we need to be able to decouple the value of $\omega(e)$ from the other events. Intuitively this should be possible because when $D(n)$ occurs, after the invasion touches $\mathcal{C}_\ast$, it does not need to check any weights for edges which are $p_c$-closed until after time $T_{D_\ast}$. To formally prove this, we represent the weights $(\omega(e))$ used for the invasion as functions of three independent variables. This representation is used in the ``percolation cluster method'' of Chayes-Chayes-Newman, but their method uses them in a dynamic way, whereas ours will be static. For this representation, we assign different variables to the edges: let $(U_e^1,U_e^2,\eta_e)_{e \in \mathcal{E}^2}$ be an i.i.d. family of independent variables, where $U_e^1$ is uniform on $[0,p_c]$, $U_e^2$ is uniform on $(p_c,1]$, and $\eta_e$ is Bernoulli with parameter $p_c$. Then we set 
\[
\omega(e) = \begin{cases}
U_e^1 & \quad \text{if } \eta_e = 1 \\
U_e^2 & \quad \text{if } \eta_e = 0.
\end{cases}
\]
Next, we define another invasion percolation process $(\hat{G}_n)$ (a sequence of growing subgraphs) as follows. If $D(n)$ does not occur, then $\hat{G}_n$ is equal to $(0, \{\})$ for all $n$ (it stays at the origin with no edges). If $D(n)$ does occur, then $\hat{G}_n$ proceeds according to the usual invasion rules (with the weights $(\omega(e))$) until it reaches $\mathcal{C}_\ast$. After it contains a vertex of $\mathcal{C}_\ast$, it no longer checks the $\omega$-value of any edge $\hat{e}$ with $\eta_{\hat{e}} = 0$ (it only checks the $\eta$-value). When there are no more edges with $\eta$-value equal to one for the invasion to invade, it stops (we set $\hat{G}_n$ to be constant after this time). Associated to this new invasion will be stopping times similar to $t_{D_\ast}$ and $T_{D_\ast}$:
\begin{align*}
\hat{t}_{D_\ast} &= \text{ first time at which the new invasion invades an edge from $D_\ast$} \\
\hat{T}_{D_\ast} &= \text{ first time at which the new invasion invades the entire $p_c$-open cluster of $D_\ast$}.
\end{align*}
Note that if $D(n)$ does not occur, $\hat{t}_{D_\ast} = \hat{T}_{D_\ast} = \infty$, and that if $D(n)$ occurs, $\hat{T}_{D_\ast}$ equals the first time after which the graphs $\hat{G}_n$ become constant.

Given these definitions, the top equation of \eqref{eq: perc_cluster} equals
\begin{equation*}\label{eq: new_definitions}
\mathbb{P}(U_e^2 \in (p_c,p_c+\epsilon], \eta_e=0, e \xleftrightarrow{q_n} \partial B(n) \text{ in } B(4n), D(n), t_{D_\ast} \leq t_n < T_{D_\ast}).
\end{equation*}
We then claim that
\begin{equation}\label{eq: replace_invasion}
\begin{split}
&\mathbb{P}(U_e^2 \in (p_c,p_c+\epsilon], \eta_e=0, e \xleftrightarrow{q_n} \partial B(n) \text{ in } B(4n), D(n), t_{D_\ast} \leq t_n < T_{D_\ast}) \\
=~&\mathbb{P}(U_e^2 \in (p_c,p_c+\epsilon], \eta_e=0, e \xleftrightarrow{q_n} \partial B(n) \text{ in } B(4n), D(n), \hat{t}_{D_\ast} \leq t_n < \hat{T}_{D_\ast}).
\end{split}
\end{equation}
This equation holds because when $D(n)$ occurs, $t_{D_\ast} = \hat{t}_{D_\ast}$ and $T_{D_\ast} = \hat{T}_{D_\ast}$. Indeed, if $D(n)$ occurs, then both invasions $(G_n)$ and $(\hat{G}_n)$ are equal until they touch $\mathcal{C}_\ast$. After this time, the original invasion $(G_n)$ does not invade any $p_c$-closed edges until time $T_{D_\ast}$, and neither does $(\hat{G}_n)$ (by definition). This shows \eqref{eq: replace_invasion}.

Now that we have \eqref{eq: replace_invasion}, we simply note that because $(\hat{G}_n)$ does not use any edges in $B(2n)^c$ that are $p_c$-closed, the the times $\hat{t}_{D_\ast}$ and $\hat{T}_{D_\ast}$ are independent of $(U_e^2)_{e \in B(2n)^c}$. Furthermore, the events $\{\eta_e=0\}$, $\{e \xleftrightarrow{q_n} \partial B(n)\text{ in } B(4n)\}$, and $D(n)$ are independent of $(U_e^2)_{e \in B(2n)^c}$, and $U_e^2 \in (p_c,p_c+\epsilon]$ depends only on $(U_e^2)_{e \in B(2n)^c}$. By independence, therefore, the lower equation of \eqref{eq: replace_invasion} is equal to
\[
\frac{\epsilon}{1-p_c} \mathbb{P}(\eta_e=0, e \xleftrightarrow{q_n} \partial B(n) \text{ in } B(4n), D(n), \hat{t}_{D_\ast} \leq t_n < \hat{T}_{D_\ast}),
\]
which equals the bottom equation in \eqref{eq: perc_cluster}. This shows \eqref{eq: perc_cluster}.
\end{proof}

Combining Lemma~\ref{lem: percolation_cluster} with \eqref{eq: step_1}, and then reducing to the subevent $D^e(n)$ (recall this is the subevent of $D(n)$ on which the paths involved in $D(n)$ do not use the given $e \subset$ Ann$(2n,4n)$),  we obtain
\begin{align}
&\EE \Xi_{t_n} \nonumber \\
\geq~& \frac{\epsilon}{1-p_c} \EE Y_n \mathbf{1}_{D(n) \cap \{t_{D_\ast} \leq t_n < T_{D_\ast}\}} \nonumber \\
\geq~& \frac{\epsilon}{1-p_c} \sum_{e \subset \text{Ann}(2n,4n)} \mathbb{P}(\omega(e) > p_c, e \xleftrightarrow{q_n} \partial B(n) \text{ in } B(4n), D^e(n), t_{D_\ast} \leq t_n < T_{D_\ast}) \label{eq: step_2a}.
\end{align}
The most difficult part of the above sum is the term $t_n < T_{D_\ast}$. To ensure that this occurs, we will construct a large set of vertices in the exterior of $D_\ast$ which will connect to $D_\ast$ by $p_c$-open paths. To do this, we will need to use independence to separate the interior of $D_\ast$ from its exterior, using the following two events, which comprise pieces of the event $D(n)$.

\begin{definition} \label{def:D_int^e} For any circuit $\hat{D}_* \subset \text{Ann}(8n,16n)$ around the origin, define the event $D^e_{int}(n, \hat{D}_*)$ that the following hold.
\begin{enumerate}
\item There exists a $q_n$-open circuit around the origin in Ann($n,2n$).
\item There exists an edge $f \subset$ Ann$(6n, 7n)$ with $\omega(f) \in (q_{n}, p_c)$ such that:
\begin{enumerate}
\item there exists a $p_c$-closed dual path $P$ around the origin in Ann($4n,8n)^*  \backslash \{f^*\}$ that is connected to the endpoints of $f^*$ so that $P \cup \{f^*\}$ is a circuit around the origin, and
\item there exists a $p_c$-open path connecting an endpoint of $f$ to $B(n)$ (avoiding $e$), and another disjoint $p_c$-open path connecting the other endpoint of $f$ to $\hat{D}_\ast$.
\end{enumerate}
\end{enumerate}
We also define the event $D_{ext}(n, \hat{D}_*)$ that the following hold.
\begin{enumerate}
\item There exists a $p_c$-open path from $\hat{D}_*$ to $\partial B(16n)$.
\item $\hat{D}_*$ is the outermost $p_c$-open circuit in Ann($8n,16n)$.
\end{enumerate}
\end{definition}

Directly from the definitions, we note that for any circuit $\hat{D}_* \subset$ Ann($8n,16n)$, $D^e_{int}(n ,\hat{D}_*) \cap D_{ext}(n, \hat{D}_*)$ implies $D^e(n)$ (actually the union over $\hat{D}_\ast$ of this intersection is equal to $D^e(n)$), and the events $D^e_{int}(n, \hat{D}_*)$ and $D_{ext}(n , \hat{D}_*)$ are independent. Last, for distinct $\hat{D}_\ast$, the events $(D_{int}^e(n,\hat{D}_\ast) \cap D_{ext}(n,\hat{D}_\ast))_{\hat{D}_\ast}$ are disjoint. Decomposing \eqref{eq: step_2a} over the choice of the outermost circuit $\hat{D}_\ast$, we obtain that $\EE \Xi_{t_n}(\epsilon)$ equals
\[
\frac{\epsilon}{1-p_c} \sum_{e \subset \text{Ann}(2n,4n)} \sum_{\hat{D}_\ast} \mathbb{P}\left( \begin{array}{c}
\omega(e) > p_c, e \xleftrightarrow{q_n} \partial B(n) \text{ in } B(4n), D^e_{int}(n,\hat{D}_\ast), \\
 D_{ext}(n,\hat{D}_\ast), t_{\hat{D}_\ast} \leq t_n < T_{\hat{D}_\ast}
 \end{array} \right).
\]
(Here $t_{\hat{D}_\ast}$ and $T_{\hat{D}_\ast}$ are similar to $t_{D_\ast}$ and $T_{D_\ast}$ but defined for the detministic circuit $\hat{D}_\ast$.) Note that $\{t_{\hat{D}_\ast} \leq t_n\}$ depends only on the weights in the interior of $\hat{D}_\ast$, but $\{t_n <T_{\hat{D}_\ast}\}$ does not depend only on the exterior. To force this dependence, we simply create a large $p_c$-open cluster in the exterior of $\hat{D}_\ast$. For our deterministic $\hat{D}_\ast$, let
\[
Z(\hat{D}_\ast) = \#\{e \subset B(16)^c : \omega(e) < p_c,~e \xleftrightarrow{p_c} \hat{D}_\ast\}.
\]
If $Z(\hat{D}_\ast) > Cn^2\pi(n)$ on $D^e_{int}(n,\hat{D}_\ast) \cap D_{ext}(n,\hat{D}_\ast)$, then $t_n < T_{\hat{D}_\ast}$. Since this event depends on variables for edges in the exterior of $\hat{D}_\ast$, we can use independence for the lower bound for $\EE \Xi_{t_n}(\epsilon)$ of
\begin{equation}\label{eq: step_2}
\begin{split}\frac{\epsilon}{1-p_c} \sum_{e \subset \text{Ann}(2n,4n)} \sum_{\hat{D}_\ast} \bigg[ &\mathbb{P}\left( 
\omega(e) > p_c, e \xleftrightarrow{q_n} \partial B(n) \text{ in } B(4n), D^e_{int}(n,\hat{D}_\ast), t_{\hat{D}_\ast} \leq t_n\right) \\
&\times \mathbb{P}\left( D_{ext}(n,\hat{D}_\ast), Z(\hat{D}_\ast) > Cn^2 \pi(n) \right) \bigg].
\end{split}
\end{equation}
Note that only the first factor inside the double sum depends on $e$. To bound it, we give the next lemma. 
\begin{lemma}\label{lem: D(n)_bound}
There exists $\mathcal{C}_6$ and $C_{18}>0$ such that for all $n \geq 1$, all $\hat{D}_\ast$ around the origin in Ann$(8n,16n)$, and all $C\geq \mathcal{C}_6$,
\[
\sum_{e \subset \text{Ann}(2n,4n)} \mathbb{P}\left( \omega(e) > p_c, e \xleftrightarrow{q_n} \partial B(n) \text{ in } B(4n), D^e_{int}(n,\hat{D}_\ast), t_{\hat{D}_\ast} \leq t_n\right) \geq C_{18} n^2 \pi(n).
\]
\end{lemma}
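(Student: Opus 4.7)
The plan is to decompose the target sum into a static percolation lower bound on $\sum_e \PP(A_e)$ and a dynamic control that the invasion reaches $\hat{D}_\ast$ by time $t_n$. Set
\[
A_e := \{\omega(e) > p_c\} \cap \{e \xleftrightarrow{\;q_n\;} \partial B(n) \text{ in } B(4n)\} \cap D^e_{int}(n, \hat{D}_\ast), \qquad X := \sum_{e \subset \text{Ann}(2n,4n)} \mathbf{1}_{A_e}.
\]
Because $\hat{D}_\ast$ is a primal circuit around the origin in $\text{Ann}(8n,16n)$ and the invaded region at any time is connected to the origin, $\{R_{t_n}\ge 16n\} \subset \{t_{\hat{D}_\ast}\le t_n\}$. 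Applying Lemma~\ref{lemma: upperbound of R} with $16n$ in place of $n$ and using $\pi(16n)/\pi(n) \le 1/(4D_1)$ from \eqref{bounded D_1}, for any prescribed $\eta>0$ we can choose $\mathcal{C}_6$ large enough that $C\ge\mathcal{C}_6$ implies $\PP(R_{t_n}<16n)\le\eta$. It then suffices to show that $\EE X \ge c\,n^2 \pi(n)$ and that $\EE[X\mathbf{1}_{\{R_{t_n}<16n\}}]\le \EE X/2$.

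For the static lower bound on $\EE X$, I would fix $e\subset \text{Ann}(2n,4n)$ and combine the pieces of $A_e$ as follows. The factor $\omega(e)>p_c$ is independent of everything else and contributes $(1-p_c)$. The $q_n$-connection from $e$ to $\partial B(n)$ in $B(4n)$, not using $e$, is a 1-arm event at parameter $q_n$ over a distance $\asymp n\le L(q_n)$, so by \eqref{compare for different p} and RSW/quasi-multiplicativity it has probability $\asymp \pi(n)$. The $q_n$-open circuit in $\text{Ann}(n,2n)$ and the $p_c$-closed dual path around the origin in $\text{Ann}(4n,8n)^\ast$ each carry constant probability by RSW. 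Summing over candidate bottleneck edges $f\in\text{Ann}(6n,7n)$, the requirement that $\omega(f)\in(q_n,p_c)$, together with two $p_c$-open primal arms from $f$ to $B(n)$ and $\hat{D}_\ast$ and two $p_c$-closed dual arms closing up a circuit around the origin in $\text{Ann}(4n,8n)^\ast$, is an alternating four-arm event with RSW extensions, and a weighted count gives $(p_c-q_n)\PP(A_n^{2,2})\asymp 1/n^2$ per $f$ by \eqref{definition of $A_n^{2,2}$}; the $\asymp n^2$ choices of $f$ therefore contribute a constant. The $p_c$-closed dual circuit lives on primal edges disjoint from the increasing pieces, so FKG combines everything to give $\PP(A_e)\ge c\pi(n)$ uniformly in $e$, and thus $\EE X \ge c'\,n^2\pi(n)$.

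For the correction, Cauchy-Schwarz yields
\[
\EE[X\mathbf{1}_{\{R_{t_n}<16n\}}] \le \sqrt{\EE X^2 \cdot \PP(R_{t_n}<16n)} \le \sqrt{K\eta}\,\EE X,
\]
provided $\EE X^2 \le K(\EE X)^2$ for some constant $K$. Such a bound follows by decomposing $\PP(A_e\cap A_{e'})$ depending on whether the $q_n$-clusters of $e$ and $e'$ reaching $\partial B(n)$ in $B(4n)$ coincide (a two-arm estimate connecting $e$ to $e'$) or are disjoint (handled by BK), after factoring out the common static structure. Taking $\eta$ small enough makes the right side at most $\EE X/2$, and combining with the reduction above gives the lemma with $C_{18}=c'/2$. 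The main obstacle is precisely the second moment bound: because the static structure inside $D^e_{int}(n,\hat{D}_\ast)$---the dual closed circuit, the bottleneck $f$, and the $p_c$-open paths to $\hat{D}_\ast$---is \emph{shared} by every $A_e$, these events are strongly positively correlated, and a naive pair estimate $\EE X^2 \lesssim |\text{Ann}(2n,4n)|\cdot \EE X$ is too weak by a factor $1/\pi(n)$; one must isolate the $e$-specific $q_n$-arm from the shared structure before summing over pairs, in the spirit of the second-moment analysis of $|\mathcal{S}_n|$ in J\'arai \cite{MR1981994}.
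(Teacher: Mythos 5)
Your proposal follows the paper's argument essentially step for step: the same reduction of $\{t_{\hat{D}_\ast}\le t_n\}$ to $\{R_{t_n}\ge 16n\}$ via Lemma~\ref{lemma: upperbound of R}, the same gluing/FKG/four-arm lower bound of order $n^2\pi(n)$ on the static sum, and the same Cauchy--Schwarz treatment of the correction term. The only point worth flagging is that the ``main obstacle'' you identify at the end is not actually an obstacle: since each $A_e$ implies the plain one-arm event $\{e \xleftrightarrow{\;p_c\;} \partial B(e,n/2)\}$, one bounds $\EE X^2$ by the second moment of that simpler count, which is $O\left((n^2\pi(n))^2\right)$ by the standard Kesten pair estimate, and this is $O\left((\EE X)^2\right)$ precisely because of the separately proven lower bound $\EE X \ge c' n^2\pi(n)$ --- no isolation of the $e$-specific arm from the shared structure is needed, and this monotone domination is exactly what the paper does.
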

\begin{proof}
First note that for any $\hat{D}_\ast$, we have $t_{\hat{D}_\ast} \leq t_n$ whenever $R_{t_n} \geq 16n$. Therefore it will suffice to show a lower bound for 
\begin{align*} \label{eq: lowerbound_with_R}
\sum_{e \subset \text{Ann}(2n,4n)} \mathbb{P}\left( \omega(e) > p_c, e \xleftrightarrow{q_n} \partial B(n) \text{ in } B(4n), D^e_{int}(n,\hat{D}_\ast), R_{t_n} \geq 16n \right).
\end{align*}
To do this, we will show both a lower bound
\begin{equation}\label{eq: lower_one}
\sum_{e \subset \text{Ann}(2n,4n)} \mathbb{P}\left( \omega(e) > p_c, e \xleftrightarrow{q_n} \partial B(n) \text{ in } B(4n), D^e_{int}(n,\hat{D}_\ast) \right) \geq C_{19}n^2\pi(n)
\end{equation}
and an upper bound
\begin{equation}\label{eq: upper_one}
\sum_{e \subset \text{Ann}(2n,4n)} \mathbb{P}\left( \omega(e) > p_c, e \xleftrightarrow{q_n} \partial B(n) \text{ in } B(4n), D^e_{int}(n,\hat{D}_\ast), R_{t_n} < 16n \right) \leq \frac{C_{19}}{2} n^2\pi(n),
\end{equation}
for all $n$, so long as $C$ is larger than some $\mathcal{C}_6$.

Inequality \eqref{eq: upper_one} is easier, so we start with it. First sum over $e$ and then apply the Cauchy-Schwarz inequality to get the upper bound
\begin{align*}
&\left( \mathbb{E}\left( \#\{e \subset \text{Ann}(2n,4n) : e \xleftrightarrow{p_c} \partial B(n) \text{ in } B(4n)\} \right)^2 \right)^{1/2} \bigg( \mathbb{P}( R_{t_n} < 16n) \bigg)^{1/2} \\
\leq~&\left( \sum_{e,f \subset \text{Ann}(2n,4n)} \mathbb{P}(e \xleftrightarrow{p_c} \partial B(e,n/2), f \xleftrightarrow{p_c} \partial B(f,n/2)) \right)^{1/2} \bigg( \mathbb{P}( R_{t_n} < 16n) \bigg)^{1/2}.
\end{align*}
Here, for example, $B(f,n/2)$ is the box of sidelength $n$ centered at the bottom-left endpoint of $e$. The fact that the sum is bounded by $(C_{20} n^2 \pi(n))^2$ follows from standard arguments, like those in \cite[p. 388-391]{MR859839}. (See the upper bound for $\EE Z_n(\ell_0)^2$ we give in full detail below \eqref{eq: PZ2} for a nearly identical calculation.) This gives us the bound
\[
\text{LHS of } \eqref{eq: upper_one} \leq C_{20}n^2\pi(n) \sqrt{\mathbb{P}(R_{t_n} < 16n)}.
\]
Due to Lemma~\ref{lemma: upperbound of R}, given any $C_{19}$ from \eqref{eq: lower_one} (assuming we show that inequality, which we will in a moment), we can find $\mathcal{C}_6$ such that for $C\geq \mathcal{C}_6$,
\[
C_{20}\sqrt{\mathbb{P}(R_{t_n} < 16n)} \leq C_{19}/2,
\]
and this completes the proof of \eqref{eq: upper_one}.

Turning to the lower bound \eqref{eq: lower_one},
since $\omega(e)$ is independent of both events $\{e \xleftrightarrow{q_n} \partial B(n) \text{ in } B(4n)\}$ and $D_{int}^e(n, \hat{D}_\ast)$, 
 \begin{align}
& \sum_{e \subset \text{Ann}(2n,4n)} \mathbb{P}\left( \omega(e) > p_c, e \xleftrightarrow{q_n} \partial B(n) \text{ in } B(4n),   D^e_{int}(n,\hat{D}_\ast)  \right) \nonumber \\
=~&(1-p_c) \sum_{e \subset \text{Ann}(2n,4n)} \mathbb{P}\left(e \xleftrightarrow{q_n} \partial B(n) \text{ in } B(4n),  D^e_{int}(n,\hat{D}_\ast)  \right) \label{eq: step_2_aa}.
\end{align} 
Estimating each summand from below uses some standard gluing constructions (see \cite[Thm.~1]{MR879034} or \cite[Lemma~6.3]{MR2573559} for some examples), so we will only indicate the main idea. It will suffice to lower bound the sum over only $e \subset \hat{B}_n := [-4n,-2n]\times [-2n,2n]$. To construct the event $D^e_{int}(n)$, we build the event $\bar{D}(n)$, defined by the following conditions:
\begin{enumerate}[{[a]}]
\item There exists a $q_n$-open circuit around the origin in Ann($n,2n$).
\item[] There exists an edge $f \subset$ $B'(n):=$Ann$(6n,7n) \cap [6n,\infty)^2$ with $\omega(f) \in (q_{n}, p_c)$ such that:
\item there exists a $p_c$-closed dual path $P$ around the origin in Ann($4n,8n)^*  \backslash \{f^*\}$ that is connected to the endpoints of $f^*$ so that $P \cup \{f^*\}$ is a circuit around the origin, and
\item there exists a $p_c$-open path connecting one endpoint of $f$ to $B(n)$ and remaining in $[-n,\infty) \times \mathbb{R}$. Also, there exists another disjoint $p_c$-open path connecting the other endpoint of $f$ to $\partial B(16n)$.
\end{enumerate}

 Figure~\ref{fig: C_f(n)} illustrates the event $\bar{D}(n)$.
\begin{figure}[h] 
\centering
\includegraphics[width=80mm]{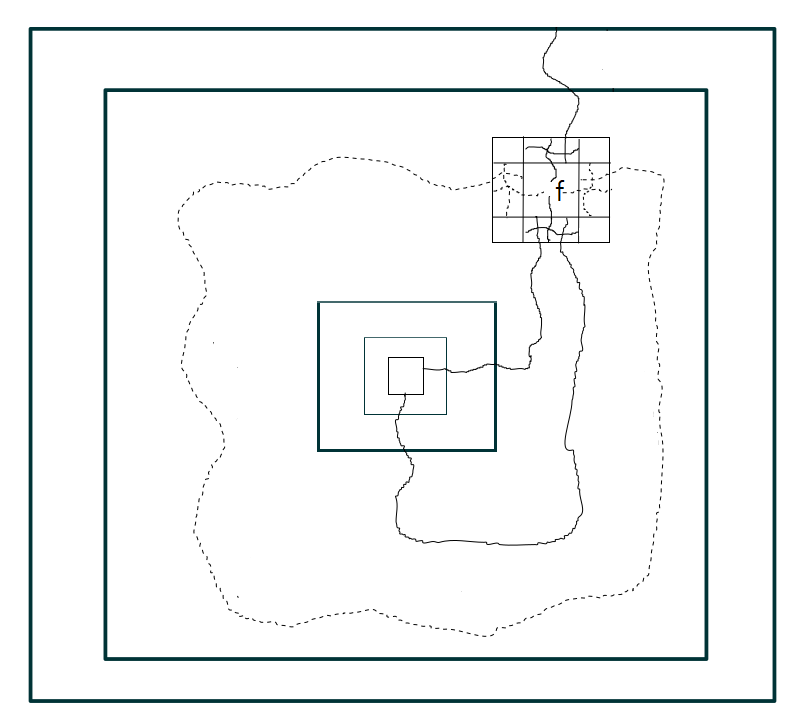} 
\caption{The event $\bar{D}(n)$. The boxes, in order from smallest to largest, are $B(n)$, $B(2n)$, $B(4n)$, $B(8n)$ and $B(16n)$. The solid circuit in Ann($2n,4n$) is $q_n$-open. The solid paths from $\partial B(n)$ to $f$ and the solid path from $f$ to $\partial B(16n)$ are $p_c$-open. The dotted circuit in Ann($4n, 8n$) is $p_c$-closed.} \label{fig: C_f(n)}
\end{figure}

The event described in $[b]$ guarantees item 2(a) in the definition of $D^e_{int}(n,\hat{D}_\ast)$. Since the event described in $[c]$ has a $p_c$-open path from $\partial B(n)$ to $\partial B(16n)$ containing $f$ without using $e$, the event $[c]$ implies item 2(b) in the definition of $D^e_{int}(n,\hat{D}_\ast)$. Therefore, for any circuit $\hat{D}^e_\ast \subset$ Ann($8n,16n$), we can estimate the sum in the bottom of \eqref{eq: step_2_aa}:
\begin{align}
&\sum_{e \subset \text{Ann}(2n,4n)} \mathbb{P}\left(e \xleftrightarrow{q_n} \partial B(n) \text{ in } B(4n), D^e_{int}(n,\hat{D}_\ast)   \right) \nonumber \\
\geq~& \sum_{e \subset \hat{B}_n} \mathbb{P}\left(e \xleftrightarrow{q_n} \partial B(n) \text{ in } B(4n) \cap (-\infty,-n] \times \mathbb{R}, \bar{D}(n) \right) \label{eq: step_2_bb}.
\end{align}

By applying the generalized FKG inequality (positive correlation for certain increasing and decreasing events, so long as they depend on particular regions of space --- see \cite[Lem.~13]{MR2438816}) and a gluing construction, one can decouple the events described in $\bar{D}(n)$ and the event $\{e \xleftrightarrow{p_c} \partial B(n)$ in $B(4n) \cap (-\infty,-n] \times \mathbb{R}\}$ to obtain the lower bound for \eqref{eq: step_2_bb} of
\begin{align}
&\sum_{e \subset \hat{B}_n} \mathbb{P}(e \xleftrightarrow{q_n} \partial B(n) \text{ in } B(4n)\cap (-\infty,-n]\times \mathbb{R}) \mathbb{P}([a]) \mathbb{P}([b],[c]) \nonumber \\
\geq~& c_{2} \mathbb{P}([b],[c]) \sum_{e \subset \hat{B}_n} \mathbb{P}(e \xleftrightarrow{q_n} \partial B(n) \text{ in } B(4n) \cap (-\infty,-n] \times \mathbb{R}). \label{eq: step_2_cc}
\end{align}

To give a lower bound for $\mathbb{P}([b],[c])$, let $A(n,f)$ be the event described in [b] and [c] (along with the condition $\omega(f) \in (q_n,p_c)$), so that this probability equals $\mathbb{P}(\cup_f A(n,f))$, and the union is over $f \subset \text{Ann}(6n,7n) \cap [6n,\infty)^2$. Letting $A'(n,f)$ be the same event, but with the $p_c$-open paths from [c] replaced by $q_n$-open paths, we obtain
\[
\mathbb{P}([b],[c]) = \mathbb{P}(\cup_f A(n,f)) \geq \mathbb{P}(\cup_f A'(n,f)).
\]
Note that the events $A'(n,f)$ for distinct $f$ are disjoint. Therefore
\begin{equation}\label{eq: cheese_pizza_1}
\mathbb{P}([b],[c]) \geq \sum_f \mathbb{P}(A'(n,f)).
\end{equation}
By a gluing argument involving the RSW theorem, the generalized FKG inequality, and Kesten's arms direction method (see \cite[Eq.~(2.9)]{MR879034}), if we define $B(n,f)$ as the event that there are two disjoint $q_n$-open paths connecting $f$ to $\partial B(f,n)$, and two disjoint $p_c$-closed dual paths connecting $f^*$ to $\partial B(f,n)$, then by using independence of the value of $\omega(f)$ from the event $A'(n,f)$, we can obtain
\begin{equation}\label{eq: cheese_pizza_2}
\mathbb{P}(A'(n,f)) \geq c_{3} (p_c-q_n) \mathbb{P}(B(f,n)).
\end{equation}
Last, by a variant of \cite[Lemma~6.3]{MR2573559} (instead of taking $p,q \in [p_c,p_n]$, one takes $p,q \in [q_n,p_c]$, with $p=q_n$ and $q = p_c$, and the proof is nearly identical), we have $\mathbb{P}(B(f,n)) \asymp \mathbb{P}(A_n^{2,2})$, where $A_n^{2,2}$ is the four-arm event from \eqref{definition of $A_n^{2,2}$}. Using this with \eqref{eq: cheese_pizza_1} and \eqref{eq: cheese_pizza_2} gives
\[
\mathbb{P}([b],[c]) \geq c_{4} (p_c-q_n) \sum_f \mathbb{P}(A_n^{2,2}).
\]
By \eqref{definition of $A_n^{2,2}$}, we establish $\mathbb{P}([b],[c]) \geq c_{5}$, and putting this in \eqref{eq: step_2_cc},
\begin{align}
&\sum_{e \subset \hat{B}_n} \mathbb{P}(e \xleftrightarrow{q_n} \partial B(n) \text{ in } B(4n)\cap (-\infty,-n]\times \mathbb{R}) \mathbb{P}([a]) \mathbb{P}([b],[c]) \nonumber \\
\geq~& c_{2} c_{5} \sum_{e \subset \hat{B}_n} \mathbb{P}(e \xleftrightarrow{q_n} \partial B(n) \text{ in } B(4n) \cap (-\infty,-n]\times \mathbb{R}). \label{eq: step_2_dd}
\end{align}

Last, to deal with the summand of \eqref{eq: step_2_dd}, we can use a gluing construction along with the FKG inequality and the RSW theorem to obtain
\[
\mathbb{P}(e \xleftrightarrow{q_n} \partial B(n)) \geq c_{6} \mathbb{P}(e \xleftrightarrow{q_n}\partial B(e,dist(e,\partial B(n))),
\]
where $dist$ is the $\ell_\infty$-distance. By \eqref{compare for different p} and \eqref{bounded D_1}, we have
\[
\mathbb{P}(e \xleftrightarrow{q_n} \partial B(e,dist(e,\partial B(n)))) \geq c_{7} \mathbb{P}(e \xleftrightarrow{p_c} \partial B(e,dist(e,\partial B(n)))) \geq c_{8}\pi(n).
\]
Placing this in \eqref{eq: step_2_dd} and summing over $e$ finally gives
\[
\sum_{e \subset \text{Ann}(2n,4n)} \mathbb{P}\left( \omega(e) > p_c, e \xleftrightarrow{q_n} \partial B(n) \text{ in } B(4n),   D^e_{int}(n,\hat{D}_\ast)  \right) \geq c_{9}n^2 \pi(n),
\]
which finishes the proof of \eqref{eq: lower_one}.
 
\end{proof}

Applying the lemma to the lower bound from \eqref{eq: step_2}, we obtain for all $C \geq \mathcal{C}_6$
\begin{align*}
\EE \Xi_{t_n}(\epsilon) &\geq \frac{\epsilon}{1-p_c} C_{21} n^2\pi(n) \sum_{\hat{D}_\ast} \mathbb{P}(D^e_{ext}(n,\hat{D}_\ast), Z(\hat{D}_\ast) > Cn^2 \pi(n)) \\
&= \frac{\epsilon}{1-p_c}C_{21}n^2\pi(n) \mathbb{P}\left( \bigcup_{\hat{D}_\ast} \{D^e_{ext}(n,\hat{D}_\ast), Z(\hat{D}_\ast) > Cn^2 \pi(n)\}\right) \\
&\geq \frac{\epsilon}{1-p_c} C_{21}n^2\pi(n) \mathbb{P}(A_n,B_n(C)),
\end{align*}
where $A_n$ is the event that there is a $p_c$-open circuit around the origin in Ann$(8n,16n)$ and $B_n(C)$ is the event that there are more than $Cn^2\pi(n)$ vertices in $B(16n)^c$ connected to $B(16n)$ by $p_c$-open paths. By the FKG inequality and the RSW theorem,
\begin{equation}\label{eq: step_3}
\EE \Xi_{t_n}(\epsilon) \geq \frac{\epsilon}{1-p_c}C_{21}C_{22} n^2 \pi(n) \mathbb{P}(B_n(C)) \text{ for $n \geq 1$, all $\epsilon>0$, and $C \geq \mathcal{C}_6$}.
\end{equation}

Last, we argue that there exists a function $F$ on $[0,\infty)$ such that $\inf_{r \in [0,m]}  F(r) > 0$ for each $m \geq 0$ and such that
\begin{equation}\label{eq: step_4_again}
\mathbb{P}(B_n(C)) \geq  F(C) \text{ for all } n \geq 1 \text{ and } C \geq 0.
\end{equation}
Combining this with \eqref{eq: step_3} and setting $G(C) = C_{21} C_{22} F(C) / (1-p_c)$ will complete the proof of Proposition~\ref{upper theorem 1} and therefore of the proof of the upper bound in Theorem~\ref{thm: main theorem}.

To show \eqref{eq: step_4_again}, we use some standard percolation arguments. For $\ell \geq 5$, set
\[
Z_n(\ell) := \#\{v \in \text{Ann}(2^\ell n, 2^{\ell+1} n) : v \xleftrightarrow{ \; p_c \;} \partial B(16n) \}.
\]
By definition of $Z_n(\ell)$ and $B_n(C)$, 
\begin{equation}\label{eq: implication}
\mathbb{P}(B_n(C)) \geq \mathbb{P}(Z_n(\ell) > Cn^2\pi(n)) \text{ for any }\ell \geq 5.
 \end{equation}
 To give a lower bound for the probability of $Z_n(\ell)$, we use the second moment method (Paley-Zygmund inequality):
\begin{equation}\label{eq: PZ}
\mathbb{P}\left(Z_n(\ell) \geq \frac{1}{2} \EE Z_n(\ell)\right) \geq \frac{1}{4} \frac{(\EE Z_n(\ell))^2}{\EE Z_n(\ell)^2}.
\end{equation}
Accordingly, we need a lower bound for $\EE Z_n(\ell)$ and an upper bound for $\EE Z_n(\ell)^2$. 

To bound $\EE Z_n(\ell)$ from below, note that if there is a $p_c$-open circuit around the origin in $\text{Ann}(2^{\ell+1}n,2^{\ell+2}n)$ and a $p_c$-open path connecting $B(16n)$ to $\partial B(2^{\ell+2}n)$, then any point $v \in \text{Ann}(2^\ell n, 2^{\ell+1}n)$ that is connected by a $p_c$-open path to $\partial B(v,2^{\ell+3})$ (the box of sidelength $2 \cdot 2^{\ell+3}$ centered at $v$) contributes to $Z_n(\ell)$. By the FKG inequality and the RSW theorem, then,
\[
\EE Z_n(\ell) \geq c_{10} f(\ell) \pi(2^{\ell+3}n) \#\{v : v \in \text{Ann}(2^\ell n,2^{\ell+1}n)\}.
\]
Here, $c_{10}$ is a lower bound for the probability of existence of the circuit, $f(\ell) > 0$ is a lower bound (depending only on $\ell$) for the probability of a connection between the two boxes, and $\pi(2^{\ell+3}n)$ is the probability corresponding to the connection between $v$ and $\partial B(v,2^{\ell+3}n)$. By \eqref{bounded D_1}, we obtain
\[
\EE Z_n(\ell) \geq \left[ c_{10} \frac{D_1}{\sqrt{2^{\ell+3}}} 2^{2\ell}\right] n^2 \pi(n).
\]
If we fix $\ell = \ell_0$ so large that this is bigger than $2Cn^2\pi(n)$ for all $n$, we obtain from \eqref{eq: implication} and \eqref{eq: PZ} that
\begin{equation}\label{eq: PZ2}
\mathbb{P}(B_n(C)) \geq \frac{C^2(n^2\pi(n))^2}{\EE Z_n(\ell_0)^2}.
\end{equation}

For the upper bound on $\EE Z_n(\ell_0)^2$, we follow the strategy of Kesten in \cite[p. 388-391]{MR859839}. First note that any $v$ counted in $Z_n(\ell_0)$ must have a $p_c$-open path connecting it to $\partial B(v,2^{\ell_0-1}n)$. Therefore by independence,
\begin{align}
\EE Z_n(\ell_0)^2 &\leq \sum_{v,w \in \text{Ann}(2^{\ell_0}n, 2^{\ell_0+1}n)} \mathbb{P}(v \xleftrightarrow{p_c} \partial B(v,2^{\ell_0-1}n), w \xleftrightarrow{p_c} \partial B(w,2^{\ell_0-1}n)) \nonumber \\
&\leq \sum_{v \in \text{Ann}(2^{\ell_0}n,2^{\ell_0+1}n)} \sum_{k=0}^{2^{\ell_0+2}n} \sum_{w : \|v-w\|_\infty = k} \pi(k/2) \pi(k/2) \pi(2k,2^{\ell_0-1}n). \label{eq: cheese_pizza}
\end{align}
Here, $\pi(2k,2^{\ell_0-1}n)$ is the probability that there is an open path connecting $B(2k)$ to $\partial B(2^{\ell_0-1}n)$.  (If $2k \geq 2^{\ell_0-1}n$, this probability is one.) By quasimultiplicativity \cite[Eq.~(4.17)]{MR2438816} and the RSW theorem, we have 
\[
\pi(k/2) \pi(2k,2^{\ell_0}n) \leq C_{23}\pi(2^{\ell_0}n),
\] 
which is itself bounded by $C_{23}\pi(n)$, so putting this in \eqref{eq: cheese_pizza}, we have an upper bound
\[
\EE Z_n(\ell_0)^2 \leq \left[ C_{23} 2^{2\ell_0} \sum_{k=0}^{2^{\ell_0+2}n} \pi(k)\right] n^2 \pi(n).
\]
By \cite[Eq.~(7)]{MR859839}, we have $\sum_{k=0}^{2^{\ell_0+2}n} \pi(k) \leq C_{24} 2^{2(\ell_0+2)}n^2 \pi(n)$, and so we finish with $\EE Z_n(\ell_0)^2 \leq C_{25}(n^2 \pi(n))^2$, where $C_{25}$ depends only on $\ell_0$. Putting this into \eqref{eq: PZ2} finishes the proof of \eqref{eq: step_4_again}.
\end{proof}

\bigskip
\noindent
{\bf Acknowledgements} The research of M.D. is supported by an NSF CAREER grant.

\bibliographystyle{amsplain}
\bibliography{ref}

\clearpage



\end{document}